\newcommand{\R}{{\mathbb R}}
\newcommand{\C}{{\mathbb C}}
\def\norm#1{\left\|#1\right\|}
\def\inpro#1{\left\langle #1 \right\rangle}
\def\calH{{\mathcal H}}
\def\calW{{\mathcal W}}
\newcommand{\sub}{\subseteq}
\newcommand{\bbF}{\mathbb{F}}
\newcommand{\diagf}{\widetilde{f}}
\newcommand{\fI}{\{f_i\}_{i\in I}}
\newcommand{\Hnwozero}{\calH^n\setminus\{0\}}
\newcommand{\range}{\operatorname{range}}
\newcommand{\Span}{\operatorname{span}}
\newcommand{\tS}{\widetilde{S}}
\newcommand{\sign}{\operatorname{sgn}}
\newtheorem{mythm}{Theorem}[section]
\newtheorem{theorem}{Theorem}[section]
\newtheorem{definition}[mythm]{Definition}
\newtheorem{example}[mythm]{Example}
\newtheorem{corollary}[mythm]{Corollary}
\newtheorem{proposition}[mythm]{Proposition}
\theoremstyle{remark}
\newtheorem{remark}[mythm]{Remark}
\begin{document}

\title{Factor posets of frames and dual frames in finite dimensions$^*$}

\author{Kileen Berry}
\address{Department of Mathematics, University of Tennessee-Knoxville}
\email{berry[at]math[dot]utk[dot]edu}

\author{Martin S. Copenhaver}
\address{Operations Research Center, MIT}
\email{mcopen[at]mit[dot]edu}

\author{Eric Evert}
\address{Department of Mathematics, UCSD}
\email{eevert[at]ucsd[dot]edu}

\author{Yeon Hyang Kim}
\address{Department of Mathematics, Central Michigan University}
\email{kim4y[at]cmich[dot]edu}

\author{Troy Klingler}

\author{Sivaram K. Narayan}
\address{Department of Mathematics, Central Michigan University}
\email{naray1sk[at]cmich[dot]edu}

\author{Son T. Nghiem}
\address{Berea College}
\email{sontnghiem[at]gmail[dot]com}

\thanks{*Research supported  by NSF-REU Grant DMS 11-56890.}

\subjclass[2010]{Primary 42C15, 05B20, 15A03}

\date{\today}

\keywords{Frames, Tight Frames, Factor Poset, $\ell_p$ norm }

\begin{abstract}
We consider frames in a finite-dimensional Hilbert space where frames are exactly the spanning sets of the vector space. A factor poset of a frame is defined to be a collection of subsets of $I$, the index set of our vectors, ordered by inclusion so that nonempty $J \subseteq I$ is in the factor poset if and only if $\{f_i\}_{i \in J}$ is a tight frame. 
We first study when a poset $P\sub 2^I$ is a factor poset of a frame and then relate the two topics by discussing the connections between the factor posets of frames and their duals. Additionally we discuss duals with regard to $\ell^p$ minimization.  \end{abstract}

\maketitle

\section{Introduction}\label{intro}
A frame for a finite dimensional Hilbert space is a redundant spanning set that is not necessarily a basis. The concept of frames was introduced by Duffin and Schaefer \cite{DS1952}. Daubechies \cite{Dau1992} popularized the use of frames. Many of the modern signal processing algorithms used in mobile phone or digital television are developed using the concept of frames. Redundancy in frames plays a pivotal role in the construction of stable signal representations and in mitigating the effect of losses in transmission of signals through communication channels \cite{GKK01, GKV1998}.  
A tight frame is a special case of a frame, which has a reconstruction formula similar to that of an orthonormal basis. Because of this simple formulation of reconstruction, tight frames are employed in a variety of applications such as sampling, signal processing, filtering, smoothing, denoising, compression, image processing, and in other areas. 

A factor poset $\bbF_F$ of a frame $F=\fI$ is the collection of subsets $J\sub I$ such that $\{f_j\}_{j\in J}$ is a tight frame for a finite-dimensional Hilbert space $\calH^n$. We find necessary conditions for a given poset to be a factor poset of a frame. We show that a factor poset is determined entirely by its empty cover (the sets $J\in\bbF_F$ that have no proper subset in $\bbF_F$). Moreover we show that if $P$ is the factor poset of a frame $F\sub\R^2$ then it is also the factor poset of another frame $G\sub\R^2$ whose vectors are multiples of the standard orthonormal basis vectors $e_1$ and $e_2$.

We study the relationship among the factor posets of dual frame pairs. Also we study when the dual frame could be tight and when a dual frame can be scaled to be a tight frame. Finally we consider the group structure among all duals of a frame. It is known that a dual is a canonical dual frame if and only if the $\ell^2$ sum of the frame coefficients is a minimizer among  $\ell^2$ sums of frame coefficients of all dual frames. We find new inequalities among  $\ell^p$ sums of these frame coefficients when $p=1$ and $p>2$.

\section{Preliminaries}

Throughout this paper $\calH^n$ denotes either $\R^n$ to $\C^n$. 
A sequence $F = \{f_i\}_{i=1}^k \sub \calH^n$ is called a frame for $\calH^n$  with frame bounds $A,B > 0$ if for any $f \in \calH^n$, 
\begin{equation}\label{frame}
A \norm{f}^2 \leq \sum_{i=1}^k | \langle f,f_i \rangle |^2 \leq B \norm{f}^2. 
\end{equation}
When $A = B = \lambda$, $F$ is called a $\lambda$-tight frame. When $\lambda =1$, the frame is called a Parseval frame. A unit-norm frame is a frame such that each vector in the frame has norm one. 
For a sequence $F = \{f_i\}_{i=1}^k \sub \calH^n$ define the analysis operator $\theta_F$ from $\calH^n$ to $\calH^k$ by 
\[
 \theta_F x = \sum_{i=1}^k \langle x, f_i \rangle e_i,
\]
where $\{e_i\}_{i=1}^k$ is an orthonormal basis for $\mathcal{H}^k$. 
The adjoint of $\theta_F$, $\theta_F^*:\mathcal{H}^k\to\mathcal{H}^n$, is defined by $\theta_F^*(e_i)=f_i$. The operator $\theta_F^*$ is called the synthesis operator. The frame operator $S_F\,:\, \calH^n \to \calH^n$ associated to $F$ is defined by $S_F = \theta_F^*\theta_F$ and the Gramian operator $E_F$ associated to $F$ is defined by $E_F =  \theta_F\theta_F^*$. The frame operator $S_F$ is a positive definite, self-adjoint, invertible operator and all of its eigenvalues belong to the interval $[A, B]$.

Given a frame $F$, another frame $G=\{g_i\}_{i=1}^k \sub \calH^n$ is said to be a dual frame of 
$F$ if the following reconstruction formula holds: 
$$ f = \sum_{i=1}^k \inpro{f, f_i} g_i \quad \text{for all } f \in \calH^n. $$ 
The canonical dual frame $\widetilde{F}$ associated with $F = \{f_i\}_{i=1}^k$ is defined by $ \widetilde{F} = \{S_F^{-1} f_i \}_{i=1}^k$.

\begin{definition}
For any vector $f=\begin{bmatrix}
f(1)\\\vdots\\f(n)
\end{bmatrix}\in\R^n$, we define the diagram vector of $f$, denoted $\diagf$, by
$$\diagf = \frac{1}{\sqrt{n-1}}\begin{bmatrix}
f^2(1)-f^2(2)\\
\vdots\\
f^2(n-1)-f^2(n)\\
\sqrt{2n}f(1)f(2)\\
\vdots\\
\sqrt{2n}f(n-1)f(n)
\end{bmatrix}\in\R^{n(n-1)}
$$
where the difference of squares $f^2(i)-f^2(j)$ and the product $f(i)f(j)$ occur exactly once for $i<j$, $i=1,\ldots,n-1$.
\end{definition}

\begin{definition}
For any vector $f\in\C^n$, we define the diagram vector $\diagf$ of $f$ to be
\[\tilde{f}=\dfrac{1}{\sqrt{n-1}}\begin{bmatrix}
|f(1)|^2-|f(2)|^2 \\ \vdots \\ |f(n-1)|^2-|f(n)|^2 \\ \sqrt{n}\, f(1)\overline{f(2)} \\ \sqrt{n}\, \overline{f(1)}f(2) \\ \vdots \\ \sqrt{n}\, f(n-1)\overline{f(n)}\\ \sqrt{n}\, \overline{f(n-1)}f(n)
\end{bmatrix}\in \C^{3n(n-1)/2}, \]
where the difference of the form $|f(i)|^2 - |f(j)|^2$ occurs exactly once for $i < j$, $i=1,2,\ldots,n-1$, and the product of the form $f(i)\overline{f(j)}$ occurs exactly once for $i \neq j$.
\end{definition}

Using these definitions a characterization of tight frames in $\calH^n$ is given in \cite{tfs}.

\begin{theorem}[\cite{tfs}]\label{thm:diagCharac}
Let $\fI$ be a sequence of vectors in $\calH^n$, not all of which are zero. Then $\fI$ is a tight frame if and only if $\sum_{i\in I} \diagf_i=0$. Moreover, for any $f,g\in\calH^n$ we have $(n-1)\langle \diagf,\widetilde{g}\rangle = n |\langle f,g\rangle|^2-\|f\|^2\|g\|^2$.
\end{theorem}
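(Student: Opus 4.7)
The plan is to establish the inner product identity first by direct expansion, and then use it to derive the tight-frame characterization.

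For the identity, I would compute $\langle \widetilde{f}, \widetilde{g}\rangle$ entry by entry from the definition. In the real case,
\begin{equation*}
(n-1)\langle \widetilde{f},\widetilde{g}\rangle = \sum_{i<j}(f^2(i)-f^2(j))(g^2(i)-g^2(j)) + 2n\sum_{i<j} f(i)f(j)g(i)g(j).
\end{equation*}
Expanding the first sum, each diagonal term $f^2(k)g^2(k)$ appears exactly $n-1$ times (once per pair containing $k$), while the cross terms reassemble into $\sum_{i\ne j}f^2(i)g^2(j) = \|f\|^2\|g\|^2 - \sum_k f^2(k)g^2(k)$. The second sum equals $n\bigl(\langle f,g\rangle^2 - \sum_k f^2(k)g^2(k)\bigr)$, since $2\sum_{i<j}f(i)f(j)g(i)g(j) = \sum_{i\ne j} f(i)g(i)f(j)g(j) = \langle f,g\rangle^2 - \sum_k f^2(k)g^2(k)$. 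Collecting the terms, the $\sum_k f^2(k)g^2(k)$ contributions cancel exactly, leaving $n\langle f,g\rangle^2 - \|f\|^2\|g\|^2$. The complex case is analogous: the difference block behaves the same way with $|f(k)|^2$ and $|g(k)|^2$, and the sesquilinear block contributes
\begin{equation*}
n\sum_{i\ne j} f(i)\overline{f(j)}\,\overline{g(i)}g(j) = n|\langle f,g\rangle|^2 - n\sum_k |f(k)|^2|g(k)|^2,
\end{equation*}
after which the diagonal contributions again cancel.

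With the identity in hand, set $v := \sum_{i\in I}\widetilde{f}_i$. Applying the identity with $g = f_i$ and summing over $i\in I$ produces, for every $f\in\calH^n$,
\begin{equation*}
(n-1)\langle \widetilde{f},v\rangle = n\sum_{i\in I}|\langle f,f_i\rangle|^2 - \|f\|^2\sum_{i\in I}\|f_i\|^2.
\end{equation*}
If $v=0$, then $\sum_i|\langle f,f_i\rangle|^2 = \tfrac{1}{n}\bigl(\sum_i\|f_i\|^2\bigr)\|f\|^2$; since not every $f_i$ is zero, the scalar is positive, so $\{f_i\}$ is tight. Conversely, if $\{f_i\}$ is $\lambda$-tight, summing the tight-frame equality over any orthonormal basis of $\calH^n$ yields $\sum_i\|f_i\|^2 = n\lambda$, which makes the right-hand side above identically zero. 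Thus $\langle \widetilde{f},v\rangle = 0$ for every $f\in\calH^n$; taking $f=f_j$ and summing over $j\in I$ gives $\langle v,v\rangle = 0$, hence $v = 0$.

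The main obstacle is the combinatorial bookkeeping in the direct expansion: one must track how many times each diagonal term $f^2(k)g^2(k)$ or $|f(k)|^2|g(k)|^2$ is produced when sums constrained to $i<j$ are rewritten as sums over $i\ne j$ or over all $(i,j)$, and in the complex case one must verify that the two families of entries $\sqrt{n}f(i)\overline{f(j)}$ and $\sqrt{n}\overline{f(i)}f(j)$ together assemble into $|\langle f,g\rangle|^2$ rather than $\langle f,g\rangle^2$.
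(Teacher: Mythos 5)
Your proof is correct. The paper itself gives no argument for this statement --- it is imported from the cited reference --- so there is nothing internal to compare against; your direct expansion of $(n-1)\langle \widetilde{f},\widetilde{g}\rangle$, with the careful count that each diagonal term $f^2(k)g^2(k)$ (resp.\ $|f(k)|^2|g(k)|^2$) appears $n-1$ times in the difference block and once more from the cross terms, is the standard route and all the cancellations check out. The deduction of the tight-frame characterization is also sound, including the one point that needed care: orthogonality of $v=\sum_i\widetilde{f}_i$ to all diagram vectors does not by itself force $v=0$, but your choice $f=f_j$ summed over $j$ yields $\langle v,v\rangle=0$ directly, and positivity of the tight-frame constant follows from the hypothesis that not all $f_i$ vanish.
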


\section{Factor Posets}

In \cite{prime}, a tight frame $F=\{f_i\}_{i\in I}$ in $\calH^n$ is said to be \emph{prime} if no proper subset of $F$ is a tight frame for $\calH^n$. One of the main results in \cite{prime} is that for $k\geq n$, every tight frame of $k$ vectors in $\calH^n$ is a finite union of prime tight frames called \emph{prime factors} of $F$. Thus to study the structure of prime factors we use the well-known combinatorial object of posets. A non-empty set $P$ with a partial ordering is called a partially ordered set, or poset. A poset can be represented by a Hasse diagram. We define a poset related to frames as follows:
\begin{definition}
Let $F=\{f_i\}_{i\in I}\subseteq \calH^n\setminus\{0\}$ be a finite frame, where $I=\{1,\ldots,k\}$. The \emph{factor poset} of $F$, denoted $\bbF_F$, is defined to be a collection of subsets of $I$ ordered by set inclusion so that non-empty $J\sub I$ is in $\bbF_F$ if and only if $\{f_j\}_{j\in J}$ is a tight frame for $\calH^n$. We always assume $\emptyset\in\bbF_F$.
\end{definition}

\begin{example}
Let $F=\{e_1,e_2,e_2\}\sub\R^2$ and $I=\{1,2,3\}$. Then $\bbF_F = \{\emptyset,\{1,2\},\{1,3\}\}$ and the Hasse diagram is 
\begin{center}
\begin{tikzpicture}
\node (center) {};
  \node [left of=center](left) {$\{1,2\}$};
 \node [right of=center] (right) {$\{1,3\}$};
 \node [below of=center] (bot) {$\{\}$};
  \draw [ shorten <=-3pt,shorten >=-3pt] (bot) -- (left);
  \draw [ shorten <=-3pt,shorten >=-3pt] (bot) -- (right);
\end{tikzpicture}
\end{center}
\end{example}

\begin{example}\label{eg:secondHasse}
Let $F=\{e_1,e_2,-e_1,-e_2\}\sub\R^2$ and $I=\{1,2,3,4\}$. Then the Hasse diagram of $\bbF_F$ is
\begin{center}
\begin{tikzpicture}
    \node (top) at (0,0) {$\{1,2,3,4\}$};
    \node [below left  of=top] (mid1) {$\{2,3\}$};
    \node [left of=mid1] (left) {$\{1,2\}$};
 \node [below right of=top] (mid2) {$\{3,4\}$};
 \node [right of=mid2] (right) {$\{4,1\}$};
 \node [below right of =mid1] (bot) {$\{\}$};
 \draw [ shorten <=-4pt,shorten >=-4pt] (top) -- (left);
  \draw [ shorten <=-4pt,shorten >=-4pt] (top) -- (right);
   \draw [ shorten <=-4pt,shorten >=-4pt] (top) -- (mid1);
  \draw [ shorten <=-4pt,shorten >=-4pt] (top) -- (mid2);
  \draw [ shorten <=-4pt,shorten >=-4pt] (bot) -- (left);
  \draw [ shorten <=-4pt,shorten >=-4pt] (bot) -- (right);
    \draw [ shorten <=-4pt,shorten >=-4pt] (bot) -- (mid1);
  \draw [ shorten <=-4pt,shorten >=-4pt] (bot) -- (mid2);
\end{tikzpicture}
\end{center}
\end{example}

The next lemma gives us three equivalent conditions for when the union of elements of $\bbF_F$ is an element of $\bbF_F$.

\begin{proposition}\label{prop:equivCondForInclusion}
Let $F=\{f_i\}_{i\in I}\sub\calH^n\setminus\{0\}$ be a tight frame. Suppose $\bbF_F$ is the factor poset and let $C,D\in\bbF_F$. Then the following are equivalent:
\begin{enumerate}[(i)]
\item $C\cup D\in\bbF_F$,
\item $C\cap D\in\bbF_F$,
\item $C\triangle D\in \bbF_F$, and
\item $C\setminus D\in \bbF_F$.
\end{enumerate}
\end{proposition}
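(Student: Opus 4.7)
The plan is to translate each of the four set-theoretic conditions into the linear-algebraic language of Theorem~\ref{thm:diagCharac}, and then read off the equivalences from elementary inclusion--exclusion. For every $J \sub I$ set
\[
v_J := \sum_{i \in J} \diagf_i,
\]
so that Theorem~\ref{thm:diagCharac} (together with the convention $\emptyset \in \bbF_F$ and the hypothesis $F \sub \calH^n \setminus \{0\}$, which prevents the ``all zero'' exclusion in that theorem from ever being active) yields the clean criterion: for any $J \sub I$, one has $J \in \bbF_F$ if and only if $v_J = 0$.

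Next I would write down the three standard inclusion--exclusion identities for indicator sums,
\[
v_{C \cup D} = v_C + v_D - v_{C \cap D}, \qquad v_{C \setminus D} = v_C - v_{C \cap D}, \qquad v_{C \triangle D} = v_C + v_D - 2\, v_{C \cap D}.
\]
Using the hypothesis $C, D \in \bbF_F$, i.e.\ $v_C = v_D = 0$, each right-hand side collapses to a nonzero scalar multiple of $v_{C \cap D}$. Hence
\[
v_{C \cup D} = 0 \iff v_{C \cap D} = 0 \iff v_{C \triangle D} = 0 \iff v_{C \setminus D} = 0,
\]
and translating back through the criterion above produces the four-way equivalence of (i)--(iv) simultaneously, pivoted on (ii).

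There is essentially no obstacle beyond a small amount of bookkeeping; the one point worth being careful about is that some of $C \cap D$, $C \setminus D$, or $C \triangle D$ may be empty (e.g.\ when $C$ and $D$ are disjoint, nested, or equal). But the uniform criterion ``$J \in \bbF_F$ iff $v_J = 0$'' handles these degenerate cases without extra work: the empty sum is zero and $\emptyset \in \bbF_F$ by convention, so no separate case analysis is needed, and the argument is symmetric in $C$ and $D$ so the case $D \setminus C$ is automatic.
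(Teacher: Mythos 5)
Your proposal is correct and follows essentially the same route as the paper: both reduce membership in $\bbF_F$ to the vanishing of $\sum_{i\in J}\diagf_i$ via Theorem~\ref{thm:diagCharac} and then apply inclusion--exclusion with $v_C=v_D=0$. Your organization is marginally cleaner, since you express each of $v_{C\cup D}$, $v_{C\setminus D}$, $v_{C\triangle D}$ directly as a nonzero multiple of $v_{C\cap D}$ rather than chaining implications, but the underlying argument is identical.
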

\begin{proof}
By the inclusion-exclusion principle it is easy to verify that for diagram vectors of $F$ that the following hold:
\begin{enumerate}[(a)]
\item $\sum_{\ell \in C\cup D} \widetilde{f_\ell} = \sum_{\ell \in C}\widetilde{f_\ell}+\sum_{\ell \in D}\widetilde{f_\ell}-\sum_{\ell \in C\cap D}\widetilde{f_\ell}$, 
\item $\sum_{\ell \in C\cup D}\widetilde{f_\ell}=\sum_{\ell \in C\setminus D}\widetilde{f_\ell}+\sum_{\ell \in D \setminus C}\widetilde{f_\ell}+\sum_{\ell \in C\cap D}\widetilde{f_\ell}$, and
\item $\sum_{\ell \in C}\widetilde{f_\ell}= \sum_{\ell \in C\setminus D}\widetilde{f_\ell}+\sum_{\ell \in C\cap D}\widetilde{f_\ell}$.
\end{enumerate}
Since $C,D\in\bbF_F$, using Theorem \ref{thm:diagCharac} we have $\sum_{\ell \in C}\widetilde{f}_\ell=\sum_{\ell\in D}\widetilde{f}_\ell=0$. Hence, from (a) we see that $(i)\iff (ii)$. By the definition of symmetric difference $C\triangle D$, the implication $(i)\iff(ii)$ and (b) above it follows that $(i)\implies(iii)$. Conversely, if $(iii)$ holds, then from (b) we have $\sum_{\ell\in C\cup D}\widetilde{f}_\ell = \sum_{\ell\in C\cap D}\widetilde{f}_\ell$. But from (a), when $C,D\in\bbF_F$ we have $\sum_{\ell \in C\cup D} \widetilde{f}_\ell = -\sum_{\ell\in C\cap D}\diagf_\ell$. Hence $(iii)\implies (ii)$. Hence $(i)-(iii)$ are equivalent. Using (c) above we conclude that $(iv)\iff(ii)$.
\end{proof}

The above proposition gives some necessary conditions for a given poset to be  a factor poset of a frame.

\begin{proposition}
Let $F=\fI\sub\Hnwozero$ be a finite frame with corresponding factor poset $\bbF_F$. Then for any $m\in \mathbb{N}$ with $m\geq |I|=k$ there exists a frame sequence $G = \{g_j\}_{j\in J}\sub\Hnwozero$ where $J = \{1,\ldots,m\}$ such that $\bbF_F = \bbF_G$.
\end{proposition}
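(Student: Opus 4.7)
My plan is to construct $G$ by keeping the original $k$ vectors of $F$ and appending $m-k$ identical copies of a carefully chosen nonzero vector $v\in\calH^n$. Explicitly, set $g_i=f_i$ for $1\leq i\leq k$ and $g_{k+1}=\cdots=g_m=v$. Since $F$ already spans $\calH^n$, the extended family $G$ is automatically a frame, and every $J\in\bbF_F$ trivially remains in $\bbF_G$ (viewing both posets as subsets of $2^J$). The content of the proof is then to show that, for a generic $v$, no \emph{new} tight subfamilies appear in $G$.

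Any candidate new tight subfamily is indexed by some $J'=J''\cup K$ with $J''\sub I$ and $\emptyset\neq K\sub\{k+1,\ldots,m\}$ of size $\ell$. By Theorem \ref{thm:diagCharac}, this subfamily is tight if and only if
\[
\sum_{i\in J''}\diagf_i+\ell\,\widetilde{v}=0,
\]
so I need $\widetilde{v}$ to avoid the finite set
\[
B=\left\{-\frac{1}{\ell}\sum_{i\in J''}\diagf_i : J''\sub I,\ 1\leq \ell\leq m-k\right\}.
\]

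To produce such a $v$ I will run a genericity argument. The diagram-vector map $v\mapsto\widetilde{v}$ is polynomial (quadratic in the real and imaginary components of $v$) and homogeneous of degree two: $\widetilde{\alpha v}=|\alpha|^2\widetilde{v}$. Hence for each target $b$, the fiber $\{v\in\calH^n : \widetilde{v}=b\}$ is a real algebraic subvariety that cannot contain an entire line through the origin when $b\neq 0$, and equals $\{0\}$ when $b=0$ (using $n\geq 2$); in either case it is a \emph{proper} subvariety of $\calH^n$ and therefore has Lebesgue measure zero. The finite union $\bigcup_{b\in B}\{v : \widetilde{v}=b\}$ also has measure zero, so a nonzero $v$ avoiding every element of $B$ exists, and the construction is complete.

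The subtle step is verifying that each fiber $\{v : \widetilde{v}=b\}$ is a proper subset of $\calH^n$; the homogeneity of the diagram map is exactly what yields this, since no nontrivial fiber can contain the ray through any point it meets. The degenerate case $n=1$ (where every nonempty collection of nonzero scalars is automatically a tight frame, so $\bbF_F=2^I$) is handled trivially by appending any nonzero scalars.
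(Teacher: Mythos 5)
Your proof is correct and takes essentially the same approach as the paper: the paper also extends $F$ by new vectors whose diagram vectors avoid the finite set of forbidden values $-\sum_{\ell\in L}\diagf_\ell$, though it appends one vector at a time and iterates rather than adding $m-k$ copies of a single $v$ with the $1/\ell$ rescaling you use. Your measure-zero argument for the existence of such a $v$ fills in a step the paper leaves implicit.
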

\begin{proof}
We show that there exists some $g_{k+1}\in\Hnwozero$ so that $G = F\cup \{g_{k+1}\}$ satisfies $\bbF_F=\bbF_G$. Consider $S = \left\{-\sum_{\ell\in L} \diagf_\ell : \emptyset\subsetneq L\sub I\right\}$. This is a finite collection of vectors in $\R^{n(n-1)}$ or $\C^{3n(n-1)/2}$. Now select $g_{k+1}\in \Hnwozero$ so that $\widetilde{g}_{k+1}\notin S$. This completes the proof.
\end{proof}

\begin{definition}
For a frame $F=\fI\sub\calH^n$ and its factor poset $\bbF_F$, we define the empty cover of $\bbF_F$, denoted $EC(\bbF_F)$, to be the set of $J\in \bbF_F$ which cover $\emptyset\in\bbF_F$, that is,
$$EC(\bbF_F) = \{J\in \bbF_F: J\neq\emptyset\text{ and } \not\exists J'\in \bbF_F \text{ with } \emptyset \subsetneq J'\subsetneq J\}.$$
\end{definition}

\begin{example}
Let $F = \{e_1,e_2,-e_1,-e_2\}\sub\R^2$. As seen from Example \ref{eg:secondHasse},
$$EC(\bbF_F) = \{\{1,2\},\{2,3\},\{3,4\},\{4,1\}\}.$$
\end{example}

We now show that a factor poset is entirely determined by its empty cover.

\begin{proposition}\label{prop:decomp}
Let $F=\fI\sub\calH^n$ be a finite frame. If $\bbF_F$ is the factor poset of $F$, then for any non-empty $J$ in $\bbF_F\setminus EC(\bbF_F)$, there exists $J_1,J_2\in \bbF_F$ with $J_1\subsetneq J$ and $J_2\subsetneq J$ so that $J_1\cap J_2=\emptyset$ and $J_1\sqcup J_2 = J$.
\end{proposition}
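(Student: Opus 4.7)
The plan is to use the definition of the empty cover together with the equivalences from Proposition \ref{prop:equivCondForInclusion}. Since $J \in \bbF_F \setminus EC(\bbF_F)$ and $J$ is non-empty, the definition of $EC(\bbF_F)$ guarantees the existence of some $J_1 \in \bbF_F$ with $\emptyset \subsetneq J_1 \subsetneq J$. The natural candidate for a complement is $J_2 := J \setminus J_1$; I need to check that (a) $J_2 \in \bbF_F$, (b) $J_2 \neq \emptyset$, and (c) $J_1$ and $J_2$ are disjoint with union equal to $J$.

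Properties (b) and (c) are immediate: $J_2 \neq \emptyset$ since $J_1$ is a proper subset of $J$, and $J_1 \cap (J \setminus J_1) = \emptyset$ with $J_1 \sqcup (J \setminus J_1) = J$ by set-theoretic triviality. For (a), I would apply Proposition \ref{prop:equivCondForInclusion} with $C = J$ and $D = J_1$. Both $C$ and $D$ lie in $\bbF_F$ by hypothesis, so the equivalence $(ii) \iff (iv)$ applies. Since $J_1 \subsetneq J$, we have $C \cap D = J_1$, which lies in $\bbF_F$. Hence by the equivalence, $C \setminus D = J \setminus J_1 = J_2$ also lies in $\bbF_F$.

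Thus $J_1, J_2 \in \bbF_F$ with $J_1, J_2 \subsetneq J$, $J_1 \cap J_2 = \emptyset$, and $J_1 \sqcup J_2 = J$, which is the desired decomposition. There is really no obstacle here beyond invoking the prior proposition correctly; the main conceptual point is that Proposition \ref{prop:equivCondForInclusion} turns "$J_1$ and $J$ are both tight subframes" into "$J \setminus J_1$ is a tight subframe" via the diagram-vector cancellation $\sum_{\ell \in J}\widetilde{f}_\ell = 0 = \sum_{\ell \in J_1}\widetilde{f}_\ell$, so the complement's diagram vectors also sum to zero and Theorem \ref{thm:diagCharac} identifies it as a tight frame.
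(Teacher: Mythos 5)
Your proof is correct and follows essentially the same route as the paper: take a non-empty $J_1\in\bbF_F$ with $J_1\subsetneq J$ (which exists precisely because $J\notin EC(\bbF_F)$), set $J_2=J\setminus J_1$, and invoke Proposition \ref{prop:equivCondForInclusion} to get $J_2\in\bbF_F$. You actually spell out the relevant equivalence $(ii)\iff(iv)$ and the diagram-vector cancellation more explicitly than the paper does, which is a welcome addition rather than a divergence.
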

\begin{proof}
Let $J\in \bbF_F\setminus EC(\bbF_F)$ be a non-empty set. There must exist some non-empty $J_1\in \bbF_F$ so that $J_1\subsetneq J$, otherwise $J\in EC(\bbF_F)$. Using Proposition \ref{prop:equivCondForInclusion} it is easy to see that $J_2:=J\setminus J_1\in \bbF_F$. By assumption on $J$ and $J_1$ we see that $J_2$ is non-empty. Hence $J= J_1\sqcup J_2$. 
\end{proof}

\begin{corollary}\label{cor:ECdeterminesEverything}
Let $F=\fI,G=\{g_i\}_{i\in I}$ be finite frames in $\calH^n$ with factor posets $\bbF_F$ and $\bbF_G$, respectively. Then $EC(\bbF_F) = EC(\bbF_G)$ if and only if $\bbF_F=\bbF_G$. 
\end{corollary}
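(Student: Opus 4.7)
The forward direction is immediate: $EC(\bbF_F)$ is defined solely in terms of $\bbF_F$, so $\bbF_F=\bbF_G$ forces $EC(\bbF_F)=EC(\bbF_G)$. All the work is in the reverse direction, and the plan is to show that $EC(\bbF_F)$ already determines every element of $\bbF_F$ via a decomposition into disjoint empty-cover elements, which then reassemble inside $\bbF_G$ via Proposition \ref{prop:equivCondForInclusion}.

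First I would establish, by strong induction on $|J|$, the claim that every non-empty $J\in\bbF_F$ can be partitioned as $J=K_1\sqcup K_2\sqcup\cdots\sqcup K_m$ with each $K_j\in EC(\bbF_F)$. The base case is $J\in EC(\bbF_F)$, which is a trivial partition. For the inductive step, if $J\in\bbF_F\setminus EC(\bbF_F)$, Proposition \ref{prop:decomp} provides a splitting $J=J_1\sqcup J_2$ with $J_1,J_2\in\bbF_F$ both non-empty and both strictly smaller than $J$, so the inductive hypothesis applies to each and the two resulting partitions concatenate to a partition of $J$ into empty-cover elements.

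Now assume $EC(\bbF_F)=EC(\bbF_G)$, and let $J\in\bbF_F$ be non-empty. Writing $J=K_1\sqcup\cdots\sqcup K_m$ with $K_j\in EC(\bbF_F)=EC(\bbF_G)\subseteq\bbF_G$, I would prove by induction on $m$ that $K_1\cup\cdots\cup K_m\in\bbF_G$. The case $m=1$ is clear. For the inductive step, let $A=K_1\cup\cdots\cup K_{m-1}\in\bbF_G$ (by hypothesis) and $B=K_m\in\bbF_G$. Since the $K_j$ are pairwise disjoint, $A\cap B=\emptyset\in\bbF_G$; applying Proposition \ref{prop:equivCondForInclusion} in the contrapositive form $(ii)\Rightarrow(i)$ to $A,B\in\bbF_G$ gives $A\cup B\in\bbF_G$, as desired. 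Thus $J\in\bbF_G$, yielding $\bbF_F\subseteq\bbF_G$, and swapping the roles of $F$ and $G$ gives the reverse inclusion.

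The one subtlety, and the main thing to be careful about, is that Proposition \ref{prop:equivCondForInclusion} requires \emph{both} sets to already lie in the poset before one can conclude anything about their union; the inductive reassembly above is arranged precisely so that this hypothesis holds at each step, with $\emptyset\in\bbF_G$ used implicitly through the disjointness of the partition. Apart from this bookkeeping, everything reduces to two short inductions driven by Propositions \ref{prop:equivCondForInclusion} and \ref{prop:decomp}.
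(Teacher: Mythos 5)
Your proof is correct and takes essentially the same route as the paper: decompose each nonempty element of the factor poset into a disjoint union of empty-cover elements via Proposition \ref{prop:decomp}, then reassemble that union inside the other poset using the $(ii)\Rightarrow(i)$ direction of Proposition \ref{prop:equivCondForInclusion} with $\emptyset$ as the intersection. Your two explicit inductions merely make precise the closure process the paper describes informally.
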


\begin{proof}
It is obvious that if $\bbF_F=\bbF_G$ then the empty covers are equal, so we restrict our attention to the other direction. It suffices to show that the factor poset of the frame is entirely determined by its empty cover. Let $S = EC(\bbF_F)\cup \{\emptyset\}$ for a frame $F=\fI\sub \calH^n$ with $\bbF_F$ as its factor poset. For every $J_1,J_2\in S$, if $J_1\cap J_2\in S$ then append $J_1\cup J_2$ to $S$. Repeat this process until no more unions can be added. This process must terminate after finitely many iterations since $I$ is finite. Clearly the new collection of sets which we again denote by $S$ is contained in $\bbF_F$. From Proposition \ref{prop:decomp} the reverse inclusion holds. Therefore the factor poset $\bbF_F$ is determined by $EC(\bbF_F)\cup\{\emptyset\}$. The desired result follows.
\end{proof}

As a consequence of Corollary \ref{cor:ECdeterminesEverything} we get an alternate proof of the following result from \cite{prime}. 
\begin{corollary}\label{cor:primeDecomp}
Every tight frame $F = \fI\sub\Hnwozero$ can be written as a union of prime tight frames.
\end{corollary}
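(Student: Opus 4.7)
The plan is to run a straightforward recursive splitting argument whose only engine is Proposition \ref{prop:decomp}; I expect no serious obstacle, just a short cardinality induction. Since $F$ is tight, $I \in \bbF_F$. If $I \in EC(\bbF_F)$, then by the very definition of the empty cover no proper non-empty subset of $I$ lies in $\bbF_F$, which is exactly the statement that $F$ itself is a prime tight frame; the decomposition consists of the single piece $F$.

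If instead $I \in \bbF_F \setminus EC(\bbF_F)$, Proposition \ref{prop:decomp} supplies non-empty $J_1, J_2 \in \bbF_F$ with $J_1 \sqcup J_2 = I$ and $|J_1|,|J_2| < |I|$. Each $\{f_j\}_{j \in J_\ell}$ is a tight frame for $\calH^n$, so I would induct on the cardinality of the tight frame being decomposed, applying the same dichotomy to $\{f_j\}_{j \in J_1}$ and $\{f_j\}_{j \in J_2}$ separately. Since each split strictly reduces the sizes of the pieces and $I$ is finite, after finitely many steps I obtain a partition $I = K_1 \sqcup K_2 \sqcup \cdots \sqcup K_r$ in which every $K_s$ is a minimal non-empty element of $\bbF_F$, i.e.\ $K_s \in EC(\bbF_F)$, so $\{f_j\}_{j \in K_s}$ is a prime tight frame. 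Thus $F = \bigcup_{s=1}^r \{f_j\}_{j \in K_s}$.

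The only conceptual point to check is that primality is preserved when the recursion passes to a sub-tight-frame $\{f_j\}_{j \in J_\ell}$: whether a subset $K \subseteq J_\ell$ gives a tight frame depends only on the vectors indexed by $K$, so the factor poset of $\{f_j\}_{j \in J_\ell}$ is literally $\{J \in \bbF_F : J \subseteq J_\ell\}$, and its empty cover is contained in $EC(\bbF_F)$. Hence the minimal pieces produced by the recursion are genuinely prime tight subframes of the original $F$, which is all that is required. Alternatively, the same conclusion is immediate from the proof of Corollary \ref{cor:ECdeterminesEverything}, where it was shown that every element of $\bbF_F$ arises by iteratively combining elements of $EC(\bbF_F)$; applied to the element $I \in \bbF_F$ this yields the prime decomposition of $F$ directly.
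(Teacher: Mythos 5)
Your argument is correct and is essentially the paper's own proof made explicit: the paper's one-line justification rests on the observation that every $J\in EC(\bbF_F)$ indexes a prime tight frame, with the recursive splitting of $I$ into empty-cover pieces via Proposition \ref{prop:decomp} left implicit, and that is exactly the cardinality induction you carry out. Your check that the factor poset of a tight subframe is the restriction of $\bbF_F$ is a worthwhile detail the paper omits, but it does not change the approach.
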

The proof of Corollary \ref{cor:primeDecomp} follows from observing that if $J\in EC(\bbF_F)$ then $\{f_j\}_{j\in J}$ is a prime tight frame. An important case of factor posets occurs when we consider a tight frame $F = \fI\sub \Hnwozero$. Note that when $F$ is tight $\emptyset,I\in \bbF_F$.
\begin{definition}
Suppose $F=\fI\sub \Hnwozero$ is a frame. Let $\chi(F)$ denote the sequence indexed by $I$ where $\chi(F)(i)$ is the number of times $i$ occurs in $EC(\bbF_F)$ for each $i\in I$. We call $\chi(F)$ the characteristic of $F$. If $\chi(F)(i)=m$ for all $i\in I$ then $F$ is said to have uniform characteristic.
\end{definition}

\begin{example}
Let $F = \{e_1,e_1,e_2\}\sub \R^2$. Then $EC(\bbF_F) = \{\{1,3\},\{2,3\}\}$ and $\chi(F) = (1,1,2)$. Hence $\chi(F)$ need not be uniform.
\end{example}
\begin{proposition}\label{prop:posUniCharac}
If $F = \fI\sub\Hnwozero$ has positive uniform characteristic, then $F$ is a tight frame.
\end{proposition}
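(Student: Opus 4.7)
The plan is to translate the statement into the language of diagram vectors via Theorem \ref{thm:diagCharac} and then exploit the uniformity of $\chi(F)$ through a simple double-counting argument. By the theorem, showing that $F$ is a tight frame for $\calH^n$ reduces to showing $\sum_{i\in I}\widetilde{f}_i=0$ (the hypothesis $F\sub \Hnwozero$ ensures that not all $f_i$ are zero).

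First I would note that for each $J\in EC(\bbF_F)$, by definition $\{f_j\}_{j\in J}$ is a (prime) tight frame for $\calH^n$, hence Theorem \ref{thm:diagCharac} gives $\sum_{j\in J}\widetilde{f}_j=0$. Summing this identity over all sets in the empty cover yields
\[
0 \;=\; \sum_{J\in EC(\bbF_F)}\sum_{j\in J}\widetilde{f}_j \;=\; \sum_{i\in I}\Big(\#\{J\in EC(\bbF_F):i\in J\}\Big)\,\widetilde{f}_i \;=\; \sum_{i\in I}\chi(F)(i)\,\widetilde{f}_i,
\]
where the middle equality simply rearranges the double sum by counting, for each index $i$, how many empty-cover sets it belongs to.

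Now I invoke the positive uniform characteristic hypothesis: there is a fixed integer $m>0$ with $\chi(F)(i)=m$ for every $i\in I$. Substituting this into the displayed equation yields $m\sum_{i\in I}\widetilde{f}_i=0$, and since $m>0$ we conclude $\sum_{i\in I}\widetilde{f}_i=0$. Applying Theorem \ref{thm:diagCharac} in the reverse direction gives that $F$ is a tight frame for $\calH^n$.

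There is essentially no technical obstacle; the only thing to verify is that $EC(\bbF_F)$ is nonempty (otherwise the sum over $J$ is empty and the argument is vacuous). But $m>0$ together with $I\neq\emptyset$ forces every index to appear in some element of $EC(\bbF_F)$, so $EC(\bbF_F)\neq\emptyset$. The core of the argument is therefore the bookkeeping identity above, which is exactly what the uniform characteristic was defined to control.
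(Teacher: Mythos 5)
Your proof is correct and is essentially identical to the paper's argument: both sum the identity $\sum_{j\in J}\widetilde{f}_j=0$ over all $J\in EC(\bbF_F)$, regroup by index to get $m\sum_{i\in I}\widetilde{f}_i=0$, and divide by $m>0$. Your remark that positivity forces $EC(\bbF_F)\neq\emptyset$ is a small extra care the paper leaves implicit.
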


\begin{proof}
Suppose that $F$ has uniform characteristic $m>0$. Let $S_1,\ldots,S_h$ be the elements of $EC(\bbF_F)$. Then $\sum_{i\in S_q}\diagf_i = 0$ for $q=1,\ldots,h$. So $\sum_{q=1}^h\sum_{i\in S_q}\diagf_i=0$. Since $j\in I$ occurs in $EC(\bbF_F)$ $m$ times it follows that $\diagf_j$ occurs $m$ times in the sum $\sum_q\sum_{i\in S_q}\diagf_i=0$. Hence
$$\sum_{j\in I}\diagf_j = \frac{1}{m}\left(\sum_q\sum_{i\in S_q}\diagf_i\right)=0.$$
Hence $F$ is a tight frame.
\end{proof}
\begin{remark}
The condition in Proposition \ref{prop:posUniCharac} is sufficient but not necessary. Consider the frame $F=\{e_1,e_1,e_2,e_2,e_1+e_2,e_1-e_2\}$ which is a tight frame but $\chi(F) = (2,2,2,2,1,1)$.
\end{remark}

The following theorem states that give a factor poset $P$ of a frame in $\R^2$ we can find another frame with vectors parallel to $e_1$ and $e_2$ (the standard othonormal basis) whose factor poset is also $P$.

\begin{theorem}\label{thm:projToONB}
Let $F=\fI\sub\R^2\setminus\{0\}$ be a finite frame with $I=\{1,\ldots,k\}$. Then there exists a frame $G=\{g_i\}_{i\in I}$ whose vectors are scaled multiples of the standard orthonormal basis $e_1$ and $e_2$ such that $\bbF_F=\bbF_G$.
\end{theorem}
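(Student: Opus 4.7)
The plan is to exploit the identification $\R^2 \cong \C$ via $(a,b) \mapsto a + bi$. For $n=2$ a direct computation from the definition shows that the diagram vector of $f \in \R^2$ is, under this identification, the complex number $f^2$; hence by Theorem \ref{thm:diagCharac}, $\{f_j\}_{j\in J}$ is a tight frame if and only if $\sum_{j\in J} f_j^2 = 0$ in $\C$. For target vectors, $g = c e_1$ yields $\widetilde{g} = c^2 > 0$ and $g = c e_2$ yields $\widetilde{g} = -c^2 < 0$ (as real numbers on the first coordinate axis). So, writing $z_i := f_i^2$, it suffices to produce nonzero reals $r_1,\ldots,r_k$, not all of the same sign, such that
\[
\sum_{j\in J} r_j = 0 \;\Longleftrightarrow\; \sum_{j\in J} z_j = 0 \quad \text{for every } J \sub I;
\]
one then sets $g_i = \sqrt{r_i}\,e_1$ when $r_i > 0$ and $g_i = \sqrt{|r_i|}\,e_2$ when $r_i < 0$, and $\bbF_F = \bbF_G$ follows immediately.

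In the main case $\bbF_F \neq \{\emptyset\}$, I would apply a generic real-linear projection: set $r_i := \operatorname{Re}(\alpha z_i)$ for a unit $\alpha \in \C$. The forward implication is automatic by linearity, and each nonzero partial sum $w_J := \sum_{j\in J} z_j$ (there are at most $2^k$ of them) forbids only two antipodal values of $\alpha$, so a generic $\alpha$ achieves the required equivalence; specializing to $J = \{i\}$ also gives $r_i \neq 0$, since $z_i = f_i^2 \neq 0$. For the spanning condition, observe that if some nonempty $J_0 \in \bbF_F$ exists, then $\sum_{j\in J_0} z_j = 0$ places $0 \in \operatorname{conv}\{z_i\}$, so the $z_i$ cannot all lie in a single open half-plane through the origin; it follows that at most finitely many further values of $\alpha$ need be excluded to guarantee the $r_i$ have mixed signs, and such an $\alpha$ exists.

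The remaining case $\bbF_F = \{\emptyset\}$ reduces to producing any spanning axis-aligned frame $G$ with no nonempty tight subset. An explicit choice suffices: take $g_1 = e_2$ and $g_i = \sqrt{i}\,e_1$ for $i \geq 2$, giving $(r_1,\ldots,r_k) = (-1, 2, 3, \ldots, k)$. Any nonempty $J$ with $\sum_{j \in J} r_j = 0$ must contain the index $1$ and satisfy $\sum_{j \in J \setminus \{1\}} j = 1$, which is impossible because each such summand is at least $2$; hence $\bbF_G = \{\emptyset\}$. The main conceptual ingredient is the $f \mapsto f^2$ reformulation; after that, both cases follow by elementary arguments, and the only subtlety worth monitoring is the spanning requirement on $G$, which is settled by the half-plane observation in the main case and by explicit choice in the trivial case.
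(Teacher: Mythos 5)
Your proof is correct and follows essentially the same strategy as the paper's: the identification $f\mapsto f^2$ is precisely the diagram-vector map for $n=2$, and taking $r_i=\operatorname{Re}(\alpha z_i)$ for a generic unit $\alpha$ is exactly the paper's generic rank-one projection of the diagram vectors, with the sign of $r_i$ dictating whether $g_i$ is along $e_1$ or $e_2$. The one genuine addition is your explicit treatment of the degenerate case $\bbF_F=\{\emptyset\}$, where spanning of $G$ cannot be extracted from a tight subframe; the paper's argument for why $G$ spans $\R^2$ is only given under the assumption that $\bbF_F$ contains a nonempty set.
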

\begin{proof}
Let $\{J_\ell:1\leq \ell\leq 2^k\}$ be an enumeration of $2^I$ and let
$$2^I\setminus \bbF_F = \left\{J_{\ell_r} : \sum_{i\in J_{\ell_r}}\diagf_i\neq 0\right\}.$$
Consider a projection $P$ of rank 1 on $\R^2$ such that $\range(P)\neq \left(\Span\left\{\sum_{i\in J_{\ell}}\diagf_i\right\}\right)^\perp$ for any $J_{\ell}$. Let $\tS=\{P(\diagf_i) :1\leq i\leq k\}$ and $S$ be the a set of vectors of cardinality $k$ in $\R^2$ whose diagram vectors are equal to the set $\tS$.

We now claim that $\sum_{i\in J_\ell} \diagf_i=0$ if and only if $\sum_{i\in J_\ell}P(\diagf_i)=0$. The forward implication is clear. Now assume $\sum_{i\in J_\ell} P(\diagf_i)=0$. Then $\sum_{i\in J_\ell}\diagf_i\in \ker(P)$. By the choice of $P$, $\ker(P)\cap \left(\Span\left\{\sum_{i\in J_\ell} \diagf_i\right\}\right) = \{0\}$ for all $J_\ell\in 2^I$. Therefore, $\sum_{i\in J_\ell} \diagf_i\in \ker(P)$ iff $\sum_{i\in J_\ell}\diagf_i=0$. This proves the claim.

Now assume that $\bbF_F$ contains something other than the empty set. Then $F$ has a tight subframe. Hence there exists some $J'\in 2^I$ such that $\sum_{i\in J'}\diagf_i=0$. This implies $\sum_{i\in J'}P(\diagf_i)=0$. Because $\diagf_i\neq0$ for each $i\in J'$ we know $P(\diagf_i)\neq0$. By assumption $\range(P)=\Span\{v\}$ where $v$ is a unit vector. Then there exist nonzero scalars $\{\alpha_i\}_{i\in J'}$ such that $\alpha_iv = \diagf_i$ for each $i\in J'$. Since $0=\sum_{j\in J'} P(\diagf_j) = \sum_{j\in J} \alpha_jv$ and $\alpha_j\neq 0$ we have $s,t\in J'$ such that $\sign(\alpha_s) = - \sign(\alpha_t)$. Since $\diagf_s=\alpha_sv$ and $\diagf_t=\alpha_tv$ we must have corresponding vectors in $S$ that are nonzero and orthogonal. Since any two nonzero orthogonal vectors span $\R^2$, the vectors in $S$ must span $\R^2$ and hence form a frame.

Suppose $J_\ell\in \bbF_F$. Then $\sum_{i\in J_\ell} \diagf_i=0$. From the claim $\sum_{i\in J_\ell} \diagf_i=0$ iff $\sum_{i\in J_\ell} P(\diagf_i)=0$. Hence $J_\ell\in \bbF_S$. The reverse direction is similar, and thus $\bbF_F=\bbF_S$.

Since $\operatorname{rank}(P)=1$, there exists a unitary operator $U$ such that $Uv=e_1$. Hence
$$UP(\diagf_i) = \begin{bmatrix}
\lambda_i\\0
\end{bmatrix}$$
for some $\lambda_i\in\R$. Define $g$ as follows:
$$g_i := \left\{\begin{array}{ll}
[\sqrt{\lambda_i}\;\;\;\;\;\;0]^T,&\lambda_i\geq0\\
\begin{bmatrix}
0&\sqrt{-\lambda_i}
\end{bmatrix}^T&\lambda_i<0.
\end{array}\right.$$
Let $G=\{g_i\}_{i\in I}$. It easily follows that $UP(\diagf_i)=\widetilde{g}_i$. Moreover $\sum_{i\in J_\ell}P(\diagf_i)=0$ if and only if $\sum_{i\in J_\ell} UP(\diagf_i)=0$. Therefore $\bbF_G=\bbF_S=\bbF_F$.
\end{proof}

\begin{remark}
Based on the above Theorem \ref{thm:projToONB} we propose the following Inverse Factor Poset Problem: given a poset $P\sub 2^I$, does there exist a frame $F\sub \R^n$ such that $\bbF_F=P$?
\end{remark}

\section{Dual frames}

For a given frame $F$, we define the the set $\calW_F$ as follows:
\[
\calW_F:=
\left\{
\begin{array}{rclcl}
W=\left[
\begin{array}{rclcl}
\leftarrow &w_1& \rightarrow \\
& \vdots & \\
\leftarrow &w_n& \rightarrow \\
\end{array}
\right]
: \, \overline{w}_i \in \ker (\theta_F^*)
\end{array}
\right\}
\]

Then, by the result in \cite{ Li1995, CPX12},  we have the following proposition.

\begin{proposition}\label{dual}
Let  $F$ be a frame for $\calH^n$. Then  any dual frame to a frame $F$ can be expressed as columns of the matrix  
\begin{equation}\label{dualeq}
S_F^{-1}\theta^*_F + W, 
\end{equation}
for some $W \in \calW_F$.\end {proposition}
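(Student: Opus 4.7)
The plan is to reduce the dual frame condition to a matrix equation and then characterize its solution set as an affine translate of the kernel of a linear map.

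First, I would observe that by the definition of a dual frame, $G=\{g_i\}_{i=1}^k$ is a dual of $F$ precisely when the reconstruction identity $f=\sum_i \langle f,f_i\rangle g_i$ holds for every $f\in\calH^n$. Rewriting the right-hand side in operator form, this identity is exactly $\theta_G^*\theta_F=I_n$, so characterizing all duals reduces to describing all $n\times k$ matrices $M$ (playing the role of $\theta_G^*$) satisfying $M\theta_F=I_n$. Note that any such $M$ is automatically surjective onto $\calH^n$, so its columns already span $\calH^n$; hence the sequence of columns is automatically a frame, and no separate frame check is needed.

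Next, I would verify that the canonical dual provides one particular solution: since $S_F=\theta_F^*\theta_F$, the matrix $M_0:=S_F^{-1}\theta_F^*$ satisfies $M_0\theta_F=S_F^{-1}\theta_F^*\theta_F=S_F^{-1}S_F=I_n$. Now any dual $G$ produces some $M=\theta_G^*$ with $M\theta_F=I_n$, and setting $W:=M-M_0$ yields $W\theta_F=0$. Conversely, if $W$ is any $n\times k$ matrix with $W\theta_F=0$, then $M_0+W$ satisfies $(M_0+W)\theta_F=I_n+0=I_n$, so its columns form a dual frame. Thus the set of duals is exactly $M_0+\{W:W\theta_F=0\}$, which is the content of \eqref{dualeq}.

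It remains to unpack the equation $W\theta_F=0$ in terms of the rows of $W$ so as to match the definition of $\calW_F$. Denoting the rows of $W$ by $w_1,\ldots,w_n$, the $i$-th row of $W\theta_F$ is $w_i\theta_F$, so $W\theta_F=0$ if and only if $w_i\theta_F=0$ for every $i$. Taking conjugate transposes, this is equivalent to $\theta_F^*\,\overline{w_i}=0$, i.e.\ $\overline{w_i}\in\ker(\theta_F^*)$ for each $i$, which is precisely the condition $W\in\calW_F$. Combining the two directions gives the stated characterization.

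The main obstacle is really just the bookkeeping in the second direction (mapping the row condition $w_i\theta_F=0$ to membership of $\overline{w_i}$ in $\ker(\theta_F^*)$) and making sure the complex conjugation convention used in the definition of $\calW_F$ is applied consistently; once that translation is in hand the proof is a short linear-algebra argument built on the affine structure of the solution set of $M\theta_F=I_n$.
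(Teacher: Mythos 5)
Your proof is correct and complete. The paper itself does not prove this proposition---it simply cites the references \cite{Li1995, CPX12}---and your argument (reduce the reconstruction formula to $\theta_G^*\theta_F=I_n$, exhibit $S_F^{-1}\theta_F^*$ as one solution, and observe that the solution set is the affine translate of $\{W: W\theta_F=0\}$, which unpacks row-by-row to $\overline{w}_i\in\ker(\theta_F^*)$) is exactly the standard derivation found in those sources, including the correct handling of the conjugation in the definition of $\calW_F$ and the observation that $M\theta_F=I_n$ forces the columns of $M$ to span $\calH^n$.
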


For a given frame  $F$, let $\mathcal{G}=\{S_F^{-1} \theta_F^*+W: W\in \mathcal{W_F} \}$ be the set of all matrices whose columns form duals of $F$.  Define the operation $\oplus: \mathcal{G}\times\mathcal{G}\rightarrow \mathcal{G}$ by 
$(S_F^{-1} \theta_F^*+W_1) \oplus (S_F^{-1} \theta_F^*+W_2):=S_F^{-1} \theta_F^*+W_1+W_2$.
 
\begin{theorem} 
Let  $F$ be a frame and let $\mathcal{G}=\{S_F^{-1} \theta_F^*+W: W\in \mathcal{W_F} \}$. Then
$(\mathcal{G},\oplus)$ defines an abelian group.
\end{theorem}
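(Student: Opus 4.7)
The plan is to verify the group axioms one by one, essentially by reducing everything to the fact that $\calW_F$ is a vector space under matrix addition.

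First I would check that $\oplus$ is well-defined, i.e.\ that $(S_F^{-1}\theta_F^* + W_1)\oplus(S_F^{-1}\theta_F^* + W_2) = S_F^{-1}\theta_F^* + (W_1+W_2)$ is actually an element of $\mathcal{G}$. This amounts to showing $W_1+W_2\in\calW_F$. If the rows of $W_1$ and $W_2$ (conjugated) lie in $\ker(\theta_F^*)$, then so does their sum, since $\ker(\theta_F^*)$ is a subspace of $\calH^k$ and conjugation distributes over addition. Thus $\calW_F$ is closed under addition, and $\oplus$ maps $\mathcal{G}\times\mathcal{G}$ into $\mathcal{G}$.

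Next I would identify the identity element as $e := S_F^{-1}\theta_F^*$, which corresponds to $W=0$; the zero matrix clearly lies in $\calW_F$ since $0\in\ker(\theta_F^*)$. The computation $e\oplus(S_F^{-1}\theta_F^* + W) = S_F^{-1}\theta_F^* + (0+W) = S_F^{-1}\theta_F^* + W$ (and symmetrically on the other side) confirms this. For inverses, given $S_F^{-1}\theta_F^* + W\in\mathcal{G}$, observe that $-W\in\calW_F$ (again because $\ker(\theta_F^*)$ is a subspace, closed under scalar multiplication by $-1$), so $S_F^{-1}\theta_F^* + (-W)\in\mathcal{G}$ and
\[
(S_F^{-1}\theta_F^* + W)\oplus(S_F^{-1}\theta_F^* - W) = S_F^{-1}\theta_F^* + (W-W) = e.
\]

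Finally, associativity and commutativity of $\oplus$ follow immediately from associativity and commutativity of matrix addition applied to the $W$ terms: for any $W_1,W_2,W_3\in\calW_F$ we have $(W_1+W_2)+W_3 = W_1+(W_2+W_3)$ and $W_1+W_2 = W_2+W_1$. There is no genuine obstacle in this proof; the only substantive point is the closure of $\calW_F$ under addition and negation, which is immediate from its definition as a preimage of the subspace $\ker(\theta_F^*)$ under a linear (conjugation-then-read-rows) map.
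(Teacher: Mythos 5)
Your proof is correct and follows essentially the same route as the paper's: closure from $\ker(\theta_F^*)$ being a subspace, identity $S_F^{-1}\theta_F^*$, inverse $S_F^{-1}\theta_F^*-W$, and associativity/commutativity inherited from matrix addition. You simply spell out a few details (that $0$ and $-W$ lie in $\calW_F$) that the paper leaves implicit.
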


\begin{proof}
For any $W_1, W_2 \in \mathcal{W}$,  since $\ker(\theta_F^*)$ is a vector space, 
$W_1 + W_2 \in \mathcal{W}$, 
which implies that  $\mathcal{G}$ is closed under $\oplus$.  
Associativity and commutativity follow from associativity and commutativity of matrix addition, and the identity is given by $S_F^{-1} \theta_F^*$. 
Each element $S_F^{-1} \theta_F^*+W \in \mathcal{G}$ has an inverse $S_F^{-1} \theta_F^*-W$. 
This completes the proof. 
\end{proof}

\begin{proposition}
Let $F$ be a tight frame. Suppose that  $G \in \{S_F^{-1} \theta_F^*+W: W\in \mathcal{W_F} \}$ is a matrix whose columns form a tight frame.  
Then the subgroup generated by $G$ is contained in the set of matrices whose columns form tight duals of $F$.
\end{proposition}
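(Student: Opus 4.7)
The plan is to identify the elements of the subgroup explicitly, reduce the tightness check for each element to a single linear-algebra identity about $W$, and then read off the conclusion. Writing $D := S_F^{-1}\theta_F^*$, the operation $\oplus$ is simply addition on the $W$-coordinate, so the subgroup generated by $G = D + W$ is exactly $\{D + kW : k \in \mathbb{Z}\}$. Each such matrix lies in $\mathcal{G}$, since $\mathcal{W}_F$ is a vector space and hence closed under integer scaling, so by Proposition \ref{dual} its columns already form a dual of $F$; only tightness of those columns remains to be verified.

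The main calculation I would carry out is to expand the frame operator $(D+kW)(D+kW)^*$ into four summands and show that the two cross terms vanish. Tightness of $F$ gives $S_F = \lambda I$ and hence $DD^* = S_F^{-1} = \tfrac{1}{\lambda}I$. The defining condition $\overline{w}_i \in \ker(\theta_F^*)$ is equivalent to $\theta_F^* W^* = 0$, which immediately forces $DW^* = S_F^{-1}\theta_F^* W^* = 0$ and, by adjunction, $WD^* = 0$. The expansion therefore collapses to
\[ (D+kW)(D+kW)^* = \tfrac{1}{\lambda}I + k^2\, WW^*. \]

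Applying this identity at $k=1$ and invoking the hypothesis that $G$ is $\mu$-tight pins down $WW^* = (\mu - \tfrac{1}{\lambda})I$, a scalar multiple of the identity. Substituting back yields
\[ (D+kW)(D+kW)^* = \Bigl(\tfrac{1}{\lambda} + k^2\bigl(\mu - \tfrac{1}{\lambda}\bigr)\Bigr) I \]
for every $k\in\mathbb{Z}$, which is a scalar multiple of the identity. The one step that requires attention is verifying this scalar is strictly positive, so that the columns genuinely form a tight frame rather than collapsing to the zero matrix. This is where I would use positive semidefiniteness of $WW^*$: the identity $WW^* = (\mu - \tfrac{1}{\lambda})I$ forces $\mu \geq \tfrac{1}{\lambda}$, so the scalar is bounded below by $\tfrac{1}{\lambda} > 0$. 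Together with the earlier observation that every $D + kW$ is a dual of $F$, this completes the argument.
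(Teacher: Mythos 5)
Your proof is correct and takes essentially the same route as the paper's: expand $(S_F^{-1}\theta_F^* + kW)(S_F^{-1}\theta_F^* + kW)^*$, observe that the cross terms vanish since the rows of $W$ come from $\ker(\theta_F^*)$, and use tightness of $G$ to force $WW^*$ to be a scalar multiple of the identity. You are in fact somewhat more careful than the paper, which only checks positive multiples $m\in\mathbb{N}$ rather than all of $\mathbb{Z}$ and asserts $WW^*=\alpha I_n$ without deriving it from the hypothesis on $G$; your treatment of negative $k$ and of the positivity of the resulting frame bound fills in those small gaps.
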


\begin{proof}
Let $S_f = \lambda I_n$. Then $G = \frac{1}{\lambda} \theta_F^* +W$ for some $W \in \calW_F$, where 
$WW^* = \alpha I_n$ for some $\alpha \in \R$. 
If $H =  \frac{1}{\lambda} \theta_F^* +mW $ for some $m \in \mathbb{N}$, then 
$H^*H =  (\frac{1}{\lambda}  +m^2\alpha) I_n$, which  completes the proof. 
\end{proof}

The following example shows that in general, it is not true that the set of matrices in $\mathcal{G}$ whose columns form a tight frame is a subgroup of $(\mathcal{G}, \oplus)$. 

\begin{example}
 Let $F$ be the frame where the synthesis operator  $\theta_F^*$ is given by
$$
\theta_F^*=
\left[
\begin{array}{ccccc}
1 & 0 & 0 & 0 \\
0 & 1 & 0 & 0
\end{array}
\right].
$$
Then  the set of all matrices whose columns form a dual of $F$ is  
$$
\mathcal{G}=\left\{
\begin{array}{ccccc}
\left[
\begin{array}{ccccc}
1 & 0 & a & b \\
0 & 1 & c & d
\end{array}
\right]
:a,b,c,d \in \mathbb{R}
\end{array}
\right\}.
$$
We consider the following two matrices:
$$
A=
\left[
\begin{array}{ccccc}
1 & 0 & 0 & 1 \\
0 & 1 & 1 & 0
\end{array}
\right],
\quad 
B=
\left[
\begin{array}{ccccc}
1 & 0 & 1 & 0 \\
0 & 1 & 0 & 1
\end{array}
\right].
$$
Then $A, B \in \mathcal{G}$, and the columns of $A$ and $B$ form a tight dual of $F$. However, the columns of
$$
A \oplus B=
\left[
\begin{array}{ccccc}
1 & 0 & 1 & 1 \\
0 & 1 & 1 & 1
\end{array}
\right]
$$
do not form a tight dual of $F$. 
\end{example}

We study the relationship between the factor posets for a tight frame and its canonical dual. 
 Recall that an isomorphism on posets $(P_1, \leq_1), (P_2, \leq_2)$ is a bijection $\phi: P_1 \rightarrow P_2$ so that $\phi(a) \leq_2 \phi(b)$ if and only if $a \leq_1 b$ for all $a,b \in P_1$.
We define a stronger notion of order isomorphism. We let $S_m$ denote the symmetric group on $m$ elements.
 
 \begin{definition}
We say that two factor posets $\bbF_F$ and $\bbF_G$ corresponding to frames $F= \{f_i\}_{i\in I}$ and $G= \{g_j\}_{j\in J}$ are \textit{strongly isomorphic} if there exists some $m \in \mathbb{N}$ and some $\eta \in S_m$, such that $\eta(\bbF_F)=\sigma(\bbF_G)$, where $\eta(\bbF_F)=\{\eta(J'):J' \in \bbF_F\}$ and $\eta(J')=\{\eta(j):j \in J'\}$.
\end{definition}

If $F_J$ is a tight subframe of a $\lambda$-tight frame $F = \{f_i\}_{i \in I}$ for some $J \sub I$, then 
$\sum_{i \in J} \tilde{f_i} = 0$ so that we have $\sum_{i \in J} S^{-1}_{F}\tilde{f_i} =  \sum_{i \in J} \frac{1}{\lambda^2}\tilde{f_i} = 0$. This implies that $\{S_F^{-1} f_i \}_{j \in J}$ is a tight frame. Thus we have the following theorem. 

\begin{theorem}
A tight frame $F$ and its associated canonical dual frame $\{S_F^{-1} f_i \}_{i=1}^k$ have the same factor posets. 
\end{theorem}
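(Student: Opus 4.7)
The plan is to leverage Theorem \ref{thm:diagCharac} together with the fact that on a tight frame the inverse frame operator is a scalar multiple of the identity, so the canonical dual is just a rescaling of $F$. Concretely, if $F$ is $\lambda$-tight then $S_F = \lambda I_n$, and therefore the canonical dual is $\widetilde{F} = \{\lambda^{-1} f_i\}_{i=1}^k$. In particular the zero vectors of $F$ and $\widetilde{F}$ agree (none, since $\lambda>0$ and each $f_i \neq 0$), so the two factor posets are subsets of the same $2^I$.

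Next I would observe that the diagram vector map $f \mapsto \widetilde{f}$ is homogeneous of degree $2$: inspecting the definition entry-by-entry (differences of squared moduli and conjugate products), we see $\widetilde{(\alpha f)} = |\alpha|^2 \widetilde{f}$ for any scalar $\alpha$. Applying this to $\alpha = \lambda^{-1}$ gives $\widetilde{\lambda^{-1} f_i} = \lambda^{-2} \widetilde{f_i}$ for each $i$.

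Now fix any non-empty $J \subseteq I$. By Theorem \ref{thm:diagCharac}, $\{f_j\}_{j \in J}$ is tight if and only if $\sum_{j \in J} \widetilde{f_j} = 0$; equivalently, after multiplying by the nonzero scalar $\lambda^{-2}$, if and only if $\sum_{j \in J} \widetilde{\lambda^{-1} f_j} = 0$, which by Theorem \ref{thm:diagCharac} again is equivalent to $\{\lambda^{-1} f_j\}_{j \in J} = \{S_F^{-1} f_j\}_{j \in J}$ being tight. Thus $J \in \bbF_F \iff J \in \bbF_{\widetilde{F}}$, and together with $\emptyset \in \bbF_F \cap \bbF_{\widetilde{F}}$ by convention we conclude $\bbF_F = \bbF_{\widetilde{F}}$.

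There is no real obstacle here; the only subtle point is making explicit the scalar-homogeneity $\widetilde{(\alpha f)} = |\alpha|^2 \widetilde{f}$ of the diagram vector, which is what turns the equation $S_F = \lambda I_n$ into the equality of the two tightness conditions. The paragraph preceding the theorem already hints at this computation, so the write-up can be short.
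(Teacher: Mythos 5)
Your proposal is correct and follows essentially the same route as the paper: the paper's preceding paragraph likewise uses $S_F^{-1}=\lambda^{-1}I_n$ and the identity $\sum_{i\in J}\widetilde{\lambda^{-1}f_i}=\lambda^{-2}\sum_{i\in J}\widetilde{f_i}$ together with Theorem \ref{thm:diagCharac}. Your write-up is if anything slightly more careful, since you state the scalar-homogeneity $\widetilde{\alpha f}=|\alpha|^2\widetilde{f}$ explicitly and note that the equivalence runs in both directions.
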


This result does not hold true for non-tight frames and their canonical duals.
For example, 
the factor poset of the following frame $F$ and its canonical dual are not strongly isomorphic
\[
F =\left\{\left[
\begin{array}{ccccc}
1 \\
0 \\
\end{array}
\right],
\left[
\begin{array}{ccccc}
0 \\
1 \\
\end{array}
\right],
\left[
\begin{array}{ccccc}
\frac{ 3989\sqrt{15912321}}{100}\\
0 \\
\end{array}\right],
\left[
\begin{array}{ccccc}
0\\
\frac{3989}{10}\\
\end{array}
\right]\right\}.
\]

\begin{proposition}
There exist a frame $F$ such that no dual $G$ of $F$ has a  factor poset structure that is strongly isomorphic to $\mathbb{F}_F$.
\end{proposition}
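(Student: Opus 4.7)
The plan is to prove this existential statement by exhibiting an explicit counterexample. Since the preceding theorem shows that any tight frame and its canonical dual have identical factor posets, the frame $F$ must be non-tight. A natural starting candidate is the $\R^2$ frame from the example immediately preceding this proposition, namely $F=\{e_1,e_2,ce_1,de_2\}$ (or a closely related non-tight frame), with the constants chosen so that $\bbF_F$ has as simple a non-trivial structure as possible. Using Theorem \ref{thm:diagCharac}, I would first verify that all four diagram vectors lie on the $x$-axis with coefficients proportional to $(1,-1,c^2,-d^2)$, so that the only non-empty subset whose diagram vectors sum to zero is $\{1,2\}$; hence $\bbF_F=\{\emptyset,\{1,2\}\}$.

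Next, I would use Proposition \ref{dual} to parametrize every dual $G$ as $S_F^{-1}\theta_F^*+W$ with $W\in\mathcal{W}_F$, or equivalently, using $\sum_i g_if_i^T=I_n$, reduce to choosing two free vectors in $\R^2$ with the remaining dual vectors determined by linear relations. A candidate for strong isomorphism requires $\bbF_G=\{\emptyset,J\}$ for some two-element $J\subseteq\{1,2,3,4\}$, so the problem reduces to a finite case analysis over the six possible $J$.

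For each such candidate $J$, I would translate the requirement into a polynomial system: the equations $\sum_{i\in J}\tilde{g}_i=0$ (two scalar equations in the free parameters) together with the inequations $\sum_{i\in J'}\tilde{g}_i\neq 0$ for every other non-empty $J'\subseteq\{1,2,3,4\}$. The key step is to exploit the specific values of $c$ and $d$ to show that imposing $\{g_i,g_j\}$-tightness, combined with the forms of the remaining dual vectors dictated by the reconstruction identity, forces a second subset of $G$ to also be tight — thereby producing a factor poset strictly larger than $\{\emptyset,J\}$ and breaking strong isomorphism.

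The main obstacle is this last forcing argument, and it is where the numerical delicacy of the constants defining $F$ becomes essential: the degrees of freedom in choosing a dual are generous enough that a naive frame will admit a dual realizing any prescribed singleton factor poset, so the defining constants must be engineered so that for every candidate target $J$, the implied algebraic identity cannot be satisfied without simultaneously activating another tight subset. Once this case-by-case verification is completed for the chosen $F$, the conclusion follows.
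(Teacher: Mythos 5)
There is a genuine gap, and it sits exactly at the step you flag as ``the main obstacle'': no choice of the constants $c,d$ can make the forcing argument work for a four-vector frame in $\R^2$. With $F=\{e_1,e_2,ce_1,de_2\}$ the duals form a four-parameter family, and that is enough freedom to realize $\bbF_G=\{\emptyset,\{1,2\}\}=\bbF_F$ on the nose. Concretely, $G=\{g_i\}_{i=1}^4$ is a dual of $F$ if and only if $g_1+cg_3=e_1$ and $g_2+dg_4=e_2$; take $g_1=-e_1$, $g_3=\tfrac{2}{c}e_1$, $g_2=-e_2$, $g_4=\tfrac{2}{d}e_2$. The diagram vectors of $G$ all lie on one coordinate axis with coefficients $1,\,-1,\,4/c^2,\,-4/d^2$, so for the constants in the paper's four-vector example (and generically, whenever $c\neq d$ and $c,d\neq 2$ and no three-term combination vanishes) the only nonempty tight subset of $G$ is $\{1,2\}$, and the identity permutation is a strong isomorphism between $\bbF_F$ and $\bbF_G$. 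Thus your candidate frame admits a dual whose factor poset equals $\bbF_F$, and the case analysis you propose would terminate in a solution rather than a contradiction. (The four-vector example displayed in the paper is used only to show that $F$ and its \emph{canonical} dual can fail to have isomorphic posets, which is a strictly weaker assertion than this proposition.)

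The paper sidesteps this by taking the least redundant frame available, $F=\{e_1,e_1,e_2\}\subseteq\R^2$, with $\bbF_F=\{\emptyset,\{1,3\},\{2,3\}\}$. The duality constraint now pins the duals down to a two-parameter family in which $g_3=e_2$ is fixed and $g_1+g_2=e_1$, and the relevant feature of $\bbF_F$ is that its empty cover consists of two $2$-element sets sharing an index. Since a pair in $\R^2$ is a tight frame exactly when the two vectors are orthogonal of equal norm, a strongly isomorphic $\bbF_G$ would force two vectors of $G$ to be orthogonal to a third with all three of equal nonzero norm; combined with $g_3=e_2$ and $g_1+g_2=e_1$ this is impossible in each of the three cases for the shared index. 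If you want to salvage your write-up, replace your candidate by this three-vector frame (or, more generally, by a frame whose empty cover contains two overlapping two-element blocks and whose dual class is sufficiently rigid); the remainder of your plan --- parametrize the duals, translate two-element tightness into orthogonality plus equal norms, and run the finite case analysis --- then goes through essentially as in the paper.
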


\begin{proof} 
Let $F=\{e_1, e_1, e_2\}$ be a frame for $\R^2$, where $\{e_1,e_2\}$ are the standard orthonormal basis of $\R^2$. 
Let $G$ be an arbitrary dual of $F$, then by Proposition \ref{dual}, 
\[\theta^*_G = 
\left[
\begin{array}{ccccc}
\frac{1}{2} + a & \frac{1}{2} - a & 0 \\
b & -b & 1
\end{array}
\right],
\]
for some
$a,b \in \mathbb{R}$.
We consider the poset $ \bbF_F =\{\emptyset,  \{1,3\} \{2,3\} \} $. 
We know that two vectors in $\mathbb{R}^2$ form a tight  frame if and only the vectors are orthogonal and are of equal norm. If $F$ and $G$ have strongly isomorphic poset structures, then two of the vectors of $G$ must be orthogonal to the remaining vector in $G$, and all three vectors have equal norms. This is impossible. 
\end{proof} 

From the dual expression given in (\ref{dualeq}), we obtain the a characterization of tight duals of a tight frame. The following result is remarked in \cite{KKL12}; the proof given here is different. 

\begin{theorem}
Let $F$ be a $\lambda$-tight frame with $k$ frame elements for $\mathcal{H}^n$.  If $k < 2n$, then the canonical dual is the only tight dual of $F$. If $k \geq 2n$ then $F$ has an alternate dual that is tight.
\end{theorem}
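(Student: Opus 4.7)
The approach would be to combine the dual parametrization from Proposition~\ref{dual} with a linear-algebraic dimension count on $\ker(\theta_F^*) \sub \calH^k$. Since $F$ is $\lambda$-tight we have $S_F^{-1} = \tfrac{1}{\lambda} I_n$, so every dual of $F$ has synthesis matrix $\theta_G^* = \tfrac{1}{\lambda}\theta_F^* + W$ with $W \in \calW_F$; the defining condition on $W$ is precisely $\theta_F^* W^* = 0$. I would then expand the frame operator of $G$: the cross terms vanish because $\theta_F^* W^* = 0$ (and its adjoint), leaving
$$\theta_G^* \theta_G = \tfrac{1}{\lambda} I_n + WW^*.$$
Therefore $G$ is tight if and only if $WW^* = \alpha I_n$ for some $\alpha \geq 0$.

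Next I would reinterpret this condition geometrically. The equation $WW^* = \alpha I_n$ says the $n$ rows of $W$ are pairwise orthogonal and all have squared norm $\alpha$, and membership in $\calW_F$ says the conjugate of each such row sits in $\ker(\theta_F^*) \sub \calH^k$. So the question collapses to: can $\ker(\theta_F^*)$ accommodate $n$ nonzero pairwise orthogonal vectors? If yes, scaling them produces a nonzero admissible $W$ and hence a non-canonical tight dual; if no, $W$ must be $0$ and only the canonical dual $\tfrac{1}{\lambda}\theta_F^*$ survives.

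Because $F$ is a frame, $\theta_F^*$ is surjective and $\dim \ker(\theta_F^*) = k - n$. If $k < 2n$, then $k - n < n$, so $\ker(\theta_F^*)$ cannot contain $n$ nonzero mutually orthogonal vectors, forcing $\alpha = 0$, $W = 0$, and uniqueness of the canonical dual. If $k \geq 2n$, I would pick any orthonormal set $v_1, \ldots, v_n$ inside $\ker(\theta_F^*)$, define $W$ by placing $\overline{v_i}^{\,T}$ in the $i$-th row, and check both that $W \in \calW_F$ and that $WW^* = I_n$; this yields an alternate tight dual with tightness constant $\mu = 1 + \tfrac{1}{\lambda}$. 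The main obstacle is bookkeeping: the membership condition for $\calW_F$ is phrased in terms of conjugates of rows of $W$, while the orthogonality computation naturally lives on $WW^*$, so one has to be careful that conjugation (which preserves inner products up to a harmless complex conjugate) correctly identifies the two pictures.
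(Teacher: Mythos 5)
Your proposal is correct and follows essentially the same route as the paper: parametrize duals as $\tfrac{1}{\lambda}\theta_F^* + W$, observe that the cross terms vanish so $\theta_G^*\theta_G = \tfrac{1}{\lambda}I_n + WW^*$, reduce tightness to $WW^* = \alpha I_n$, and settle both cases by comparing $n$ with $\dim\ker(\theta_F^*) = k-n$. The only difference is that you make the dimension count and the conjugation bookkeeping explicit, which the paper leaves implicit.
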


\begin{proof}
Let $G$ be a dual frame of $F$. 
Since 
$\theta_G^* = \frac{1}{\lambda} \theta_F^* + W$ for some $W \in \calW$, we have that 
$\theta_G^* \theta_G = \frac{1}{\lambda} I_n + WW^* $. 
This implies that $G$ is tight if and only if $WW^*= \alpha I_n$ for some $\alpha \in \R$.  
If $k < 2n$, since $\dim(\ker(\theta_F^*)) < n$, we have $\alpha =0$. 
This implies that if $k < 2n$, then the canonical dual is the only tight dual of $F$. 
Let $k \ge 2n$ and 
let  $\{\overline{w}_j \}_{j=1}^{k-n}$ be an orthonormal basis for $\ker(\theta ^*)$. 
Then for any $ \alpha \in \calH \backslash  \{0\}$,  we consider 
$$ W=
\alpha \left[
 \begin{array}{rclcl}
\leftarrow &w_1& \rightarrow \\
& \vdots & \\
\leftarrow &w_n& \rightarrow \\
\end{array}
\right]
.
$$
Since $W \in \calW$, we have 
 $( \frac{1}{\lambda} \theta_F^* + W) ( \frac{1}{\lambda} \theta_F^* + W)^* = 
(\frac{1}{\lambda} + |\alpha|^2) I_n$, which implies that the columns of $ \frac{1}{\lambda} \theta_F^* + W$ forms a tight dual frame of $F$. 
\end{proof}

We remark that for any tight frame,  the canonical dual frame has the smallest frame bound among all  tight dual frames.  
The next result provides us a simple construction of a dual frame from a dual of its subframe. 

\begin{theorem}
Let $F =\{ f_i\}_{i\in I}$ be a frame for $\calH^n$ and $H = \{f_i\}_{j \in J}$ be a subframe of $F$. 
If $K = \{ g_i\}_{i \in J}$ is a dual of $H$, then $G = \{g_i\}_{i \in I}$, where 
$$
g_i =
\begin{cases}
g_i, &  \text{ if  }  i \in J  \cr
0, &  \text{ if  } i \in I \backslash J
\end{cases}, 
$$
is  a dual of $F$
\end{theorem}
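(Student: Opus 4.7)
The plan is to verify directly the defining reconstruction formula for a dual frame. By the definition given in the preliminaries, $G=\{g_i\}_{i\in I}$ is a dual of $F=\{f_i\}_{i\in I}$ precisely when $G$ is a frame for $\calH^n$ and $f=\sum_{i\in I}\langle f,f_i\rangle g_i$ for every $f\in\calH^n$. Both requirements should follow immediately from the corresponding facts for $(H,K)$, using the fact that the padded entries are zero.

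First I would fix $f\in\calH^n$ and split the summation along the disjoint union $I=J\sqcup(I\setminus J)$. The terms indexed by $I\setminus J$ vanish since $g_i=0$ for such $i$, leaving
\[
\sum_{i\in I}\langle f,f_i\rangle g_i \;=\; \sum_{i\in J}\langle f,f_i\rangle g_i,
\]
and the right-hand side equals $f$ by the hypothesis that $K=\{g_i\}_{i\in J}$ is a dual of $H=\{f_i\}_{i\in J}$. This establishes the reconstruction identity.

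To confirm that $G$ is in fact a frame, rather than merely a Bessel sequence, I would note that the reconstruction formula above exhibits every $f\in\calH^n$ as a linear combination of the vectors $\{g_i\}_{i\in I}$, so $\Span\{g_i\}_{i\in I}=\calH^n$; in a finite-dimensional Hilbert space, spanning is equivalent to being a frame. I do not foresee any real difficulty: the statement is a straightforward bookkeeping observation that duality propagates when one extends a subframe dual by zero vectors on the complementary indices, and there is no need to appeal to $\calW_F$ or Proposition \ref{dual}.
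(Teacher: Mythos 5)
Your argument is correct and is essentially the same as the paper's, which compresses the sum-splitting into the operator identity $\theta_G^*\theta_F=\theta_K^*\theta_H=I_n$; your elementwise version of the reconstruction formula is just the unpacked form of that identity. The extra remark that the reconstruction formula forces $\{g_i\}_{i\in I}$ to span $\calH^n$ (hence be a frame) is a small but legitimate addition that the paper leaves implicit.
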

\begin{proof}
The proof follows by 
$$ \theta_G^* \theta_F = \theta_K^* \theta_H = I_n. $$
\end{proof}

Since any frame has a basis subframe, we have the following.

\begin{corollary}
If $F = \{ f_i \}_{i}^k$ is a frame for $\calH^n$, then there exists a dual of $F$ 
consisting of $n$ basis vectors for $\calH^n$ and $(k-n)$ zero vectors. 
\end{corollary}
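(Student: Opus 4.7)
The plan is to deduce the corollary directly from the immediately preceding theorem. The only ingredient I would add is the existence of a basis subframe together with its dual basis, both of which are standard linear algebra.

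First, I would extract a basis subframe. Since $F=\{f_i\}_{i=1}^k$ spans $\calH^n$, there exists an index set $J\subseteq I=\{1,\dots,k\}$ with $|J|=n$ such that $H:=\{f_j\}_{j\in J}$ is a basis for $\calH^n$. (Any spanning set contains a basis; one may pick columns that form a maximal linearly independent set of $\theta_F^*$.)

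Next, I would produce a dual frame for $H$ that consists of basis vectors. Because $H$ is a basis, its synthesis operator $\theta_H^{*}$ is an invertible $n\times n$ matrix, and the frame operator $S_H=\theta_H^{*}\theta_H$ is invertible on $\calH^n$. The canonical dual $K=\{g_j\}_{j\in J}$ of $H$, given by $g_j=S_H^{-1}f_j$, is itself a basis for $\calH^n$: indeed, the biorthogonality relations $\langle f_i,g_j\rangle = \delta_{ij}$ immediately force $K$ to be linearly independent, and $|K|=n$. Moreover $K$ is a dual frame of $H$ in the sense of the paper.

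Finally, I would apply the previous theorem to the pair $(H,K)$: extending $K$ by zeros to all of $I$, that is, setting
\[
g_i = \begin{cases} S_H^{-1} f_i, & i\in J, \\ 0, & i\in I\setminus J, \end{cases}
\]
yields a dual $G=\{g_i\}_{i\in I}$ of $F$. By construction $G$ has exactly $n$ nonzero entries, which form a basis of $\calH^n$, and exactly $k-n$ zero entries, as required. There is no real obstacle here; the only thing to verify is the routine fact that a basis of $\calH^n$ has a (dual) basis as its canonical dual, after which the preceding theorem does all the work.
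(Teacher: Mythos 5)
Your argument is correct and is exactly the route the paper takes: the paper derives this corollary from the preceding theorem with the one-line remark that every frame contains a basis subframe, whose (unique, canonical) dual is itself a basis. Your write-up merely fills in the routine details of that same argument.
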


\begin{corollary}
If a frame in $\calH^n$ has a tight subframe, then it has a tight dual.
\end{corollary}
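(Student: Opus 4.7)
The plan is to apply the preceding theorem, using the canonical dual of the tight subframe as the building block. Let $F = \{f_i\}_{i \in I}$ be a frame for $\calH^n$ and suppose $H = \{f_i\}_{i \in J}$ is a tight subframe, necessarily spanning $\calH^n$, with frame operator $S_H = \lambda I_n$ for some $\lambda > 0$.

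First, I would form the canonical dual of $H$, namely $K = \{S_H^{-1} f_i\}_{i \in J} = \{\tfrac{1}{\lambda} f_i\}_{i \in J}$. Since $K$ is just a uniform rescaling of $H$, its frame operator is
\[
S_K = \sum_{i \in J} \tfrac{1}{\lambda^2} f_i f_i^* = \tfrac{1}{\lambda^2}\, S_H = \tfrac{1}{\lambda}\, I_n,
\]
so $K$ is itself a tight frame for $\calH^n$ (with frame bound $1/\lambda$) and is a dual of $H$.

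Next, I would invoke the previous theorem to extend $K$ to a dual of the full frame $F$ by setting $g_i = \tfrac{1}{\lambda} f_i$ for $i \in J$ and $g_i = 0$ for $i \in I \setminus J$. By that theorem, $G = \{g_i\}_{i \in I}$ is a dual of $F$. Finally, to see that $G$ is tight, note that the zero vectors contribute nothing to the frame operator, so $S_G = S_K = \tfrac{1}{\lambda} I_n$, and hence $G$ is a tight dual of $F$.

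There is essentially no obstacle here; the statement is an immediate combination of two observations that deserve to be noted carefully, namely that the canonical dual of a $\lambda$-tight frame is again tight (with bound $1/\lambda$), and that padding a tight frame with zero vectors preserves both its duality relationship (by the previous theorem) and the tightness of its frame operator.
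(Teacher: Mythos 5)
Your proof is correct and follows exactly the route the paper intends: the corollary is stated without proof because it is the immediate combination of the preceding theorem (padding a dual of a subframe with zero vectors yields a dual of the full frame) with the observation that the canonical dual of a $\lambda$-tight subframe is $\tfrac{1}{\lambda}$-tight. Your write-up simply makes explicit the computation $S_G = S_K = \tfrac{1}{\lambda} I_n$ that the paper leaves to the reader.
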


It is known that  a frame $F$ is a basis  if and only if a frame  $F$ has a unique dual frame \cite{HKLW07}.  We observe that a frame which is a basis has also a connection with scalability. 
To this end, we define a frame $F=\{f_i \}_{i=1}^k$ for  $\calH^n$ to be scalable if 
there exists scalars  $\{c_i \}_{i=1}^k$ such that $\{c_i f_i \}_{i=1}^k$ is a Parseval frame. 

\begin{proposition}
Let $F$ be a basis for $\calH^n$. Then $F$ is scalable if and only if $F$ is an orthogonal basis. 
\end{proposition}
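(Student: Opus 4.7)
The plan is to handle the two directions separately. The forward implication is nearly immediate, so the bulk of the argument lies in the converse, which hinges on a well-known fact: a Parseval frame with exactly $n$ elements in $\calH^n$ must be an orthonormal basis.

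For the forward direction, I would assume $F=\{f_i\}_{i=1}^n$ is an orthogonal basis of $\calH^n$ (so in particular each $f_i\neq 0$ and $\langle f_i,f_j\rangle=0$ for $i\neq j$). Setting $c_i:=1/\|f_i\|$, the rescaled family $\{c_if_i\}_{i=1}^n$ is an orthonormal basis, and any orthonormal basis is trivially a Parseval frame. Hence $F$ is scalable.

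For the converse, assume $F=\{f_i\}_{i=1}^n$ is a basis and scalable: pick scalars $\{c_i\}_{i=1}^n$ with $G:=\{c_if_i\}_{i=1}^n$ a Parseval frame. First I would note that $c_i\neq 0$ for every $i$, since otherwise $G$ would have at most $n-1$ nonzero vectors and could not span $\calH^n$, contradicting the Parseval (hence frame) property. The key step is then to show $G$ is an orthonormal basis. Consider the analysis operator $\theta_G\colon \calH^n\to\calH^n$. The Parseval condition gives
\[
\|\theta_G x\|^2=\sum_{i=1}^n |\langle x,c_if_i\rangle|^2=\|x\|^2\qquad\text{for all }x\in\calH^n,
\]
so $\theta_G$ is an isometry between finite-dimensional spaces of the same dimension and therefore unitary. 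Consequently $\theta_G^*$ is also unitary, and since $\theta_G^*e_i=c_if_i$, we conclude
\[
\langle c_if_i,c_jf_j\rangle = \langle \theta_G^*e_i,\theta_G^*e_j\rangle=\langle e_i,e_j\rangle=\delta_{ij}.
\]
This shows $\{c_if_i\}_{i=1}^n$ is orthonormal, and in particular $\langle f_i,f_j\rangle=0$ for $i\neq j$; thus $F$ itself is an orthogonal basis of $\calH^n$.

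The only step requiring any care is the argument that a Parseval frame of $n$ vectors in $\calH^n$ is automatically orthonormal; once the synthesis operator is recognized as a unitary on $\calH^n$ the conclusion is immediate, so I do not anticipate any serious obstacle in carrying out this plan.
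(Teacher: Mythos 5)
Your proof is correct and follows essentially the same route as the paper: both directions hinge on recognizing that the synthesis (equivalently, analysis) operator of a Parseval frame of $n$ vectors in $\calH^n$ is unitary, which forces orthonormality of the scaled vectors. You simply supply more of the details (the nonvanishing of the $c_i$ and the isometry-implies-unitary step) that the paper leaves implicit.
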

\begin{proof}
Let $F=\{ f_i \}_{i=1}^n$ be a basis. 
If there exists scalars  $\{c_i \}_{i=1}^n$ such that $H = \{c_i f_i \}_{i=1}^n$ is a Parseval frame, then 
$\theta_H^*$ is an unitary matrix, which implies that $F$ is an orthogonal basis. The converse is clear. 
\end{proof}

\section{$\ell^p$ norm of the frame coefficients}

It is well known that 
the $\ell^2$ norm of the frame  coefficients with respected to the canonical dual is smaller that the $\ell^2$ norm of the frame coefficients with respect to any other dual. Moreover, this $\ell^2$-minimization characterizes the canonical dual of a frame.

\begin{proposition}[\cite{HKLW07}]\label{min2}
Let $\{f_i\}_{i=1}^k$ be a frame for $\calH^n$ and let $\{g_i\}^k_{i=1}$ be a dual frame of $\{f_i\}_{i=1}^k$. Then $\{g_i\}_{i=1}^k$ is the canonical dual if and only if
\[
\sum_{i=1}^k |\langle f,g_i\rangle|^2 \leq \sum_{i=1}^k |\langle f,h_i\rangle|^2 \quad  \forall f\in \mathcal{H}^n,
\]
for all frames $\{h_i\}_{i=1}^k$ which are duals of $\{f_i\}_{i=1}^k$.
\end{proposition}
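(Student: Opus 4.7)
The plan is to reduce the inequality to a Pythagorean-style decomposition by writing an arbitrary dual via the parametrization from Proposition \ref{dual} and isolating the canonical-dual part from the free part $W$.

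First I would translate the $\ell^2$ sum into an operator-norm statement: for any dual $G=\{g_i\}_{i=1}^k$ and any $f\in\calH^n$,
\[
\sum_{i=1}^k|\langle f,g_i\rangle|^2 \;=\; \|\theta_G f\|^2.
\]
By Proposition \ref{dual}, $\theta_G^*=S_F^{-1}\theta_F^*+W$ for some $W\in\calW_F$, and hence $\theta_G=\theta_F S_F^{-1}+W^*$. Since $\widetilde F=\{S_F^{-1}f_i\}$ satisfies $\theta_{\widetilde F}=\theta_F S_F^{-1}$, this corresponds to $W=0$.

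Next I would verify the orthogonality between the canonical-dual term and the $W$-term. Because every row $\overline{w}_i$ of $W$ lies in $\ker(\theta_F^*)$, one checks (equivalently, from the duality identity $\theta_G^*\theta_F=I_n$) that $W\theta_F=0$. Therefore for any $f$,
\[
\langle \theta_F S_F^{-1}f,\; W^* f\rangle \;=\; \langle W\theta_F S_F^{-1}f,\; f\rangle \;=\; 0.
\]
Applying the Pythagorean identity in $\calH^k$ gives
\[
\|\theta_G f\|^2 \;=\; \|\theta_F S_F^{-1} f\|^2 + \|W^* f\|^2 \;=\; \|\theta_{\widetilde F} f\|^2 + \|W^* f\|^2.
\]
This immediately proves the forward direction: taking $W=0$ (the canonical dual) minimizes $\|\theta_G f\|^2$ over all admissible $W$.

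For the converse, suppose $\{g_i\}$ satisfies the minimization inequality against every dual $\{h_i\}$. Specializing $\{h_i\}=\widetilde F$ and using the decomposition above with the roles reversed forces $\|W^* f\|^2\le 0$ for every $f\in\calH^n$, hence $W^*=0$ and $G=\widetilde F$. The main thing to be careful with is handling the complex conjugation in the definition of $\calW_F$ correctly so that $W\theta_F=0$ really follows from $\overline{w}_i\in\ker(\theta_F^*)$; beyond that, the argument is a direct computation and no deeper obstacle is expected.
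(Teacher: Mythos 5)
Your proof is correct: the paper itself states this proposition without proof (citing \cite{HKLW07}), and your operator-level Pythagorean decomposition $\|\theta_G f\|^2=\|\theta_{\widetilde F}f\|^2+\|W^*f\|^2$, justified by $W\theta_F=0$ (which follows either from $\overline{w}_i\in\ker(\theta_F^*)$ or directly from $\theta_G^*\theta_F=I_n$), is exactly the standard argument from that reference written in terms of the parametrization of Proposition \ref{dual}. Both directions, including the converse forcing $W=0$, go through as you describe.
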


Using  Newton's
generalized binomial theorem  and the H$\ddot{\rm o}$lder's inequality, 
for any   two sequences  $x = \{x_i\}_{i=1}^k$  and $y = \{y_i\}_{i=1}^k$ and $p \in (1, \infty)$, we have 
\begin{equation}\label{holder}
 \|x\|_p \le \|x\|_1 \le k^{(1-\frac{1}{p})} \|x\|_p, 
 \end{equation}
where $\|x\|_p =(\sum_{i=1}^k |x_i|^p)^{1/p}$.
The right-side inequality in (\ref{holder}) with $p=2$ and Proposition  \ref{min2} gives us the following result:

\begin{proposition}\label{min1}
Let $\{f_i\}_{i=1}^k$ be a frame for $\calH^n$ and let $\{h_i\}^k_{i=1}$ be a dual frame of $\{f_i\}_{i=1}^k$. If  $\{g_i\}_{i=1}^k$ is the canonical dual, then 
\[
\sum_{i=1}^k |\langle f,g_i\rangle| \leq \sqrt{k}  \sum_{i=1}^k |\langle f,h_i\rangle| \quad  \forall f\in \mathcal{H}^n. 
\]
\end{proposition}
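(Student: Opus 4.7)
The plan is to chain together three inequalities, using (\ref{holder}) at the endpoints with $p=2$ and Proposition \ref{min2} as the middle step.

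First, I would fix $f \in \calH^n$ and apply the right-hand inequality in (\ref{holder}) with $p=2$ to the sequence $x_i = \langle f, g_i\rangle$, giving
\[
\sum_{i=1}^k |\langle f, g_i\rangle| \;\le\; \sqrt{k}\left(\sum_{i=1}^k |\langle f, g_i\rangle|^2\right)^{1/2}.
\]
Next, because $\{g_i\}$ is the canonical dual of $\{f_i\}$, Proposition \ref{min2} applied at the single vector $f$ yields
\[
\left(\sum_{i=1}^k |\langle f, g_i\rangle|^2\right)^{1/2} \;\le\; \left(\sum_{i=1}^k |\langle f, h_i\rangle|^2\right)^{1/2}.
\]
Finally, the left-hand inequality in (\ref{holder}) with $p=2$ applied to the sequence $y_i = \langle f, h_i\rangle$ gives
\[
\left(\sum_{i=1}^k |\langle f, h_i\rangle|^2\right)^{1/2} \;\le\; \sum_{i=1}^k |\langle f, h_i\rangle|.
\]
Multiplying through by $\sqrt{k}$ and concatenating the three estimates produces the desired inequality, since the universal quantifier on $f$ is preserved throughout.

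There is no real obstacle here; the only thing to notice is that Proposition \ref{min2} is a pointwise statement in $f$, so the resulting bound also holds for every $f \in \calH^n$. All three steps are elementary once the observation is made that the $\ell^1$ and $\ell^2$ norms of the frame coefficient sequences are comparable by factors involving $\sqrt{k}$.
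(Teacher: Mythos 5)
Your proof is correct and follows essentially the same route the paper intends: the comparison $\|x\|_1 \le \sqrt{k}\,\|x\|_2$ applied to the canonical-dual coefficients, the $\ell^2$-minimality of the canonical dual from Proposition \ref{min2}, and $\|y\|_2 \le \|y\|_1$ applied to the other dual's coefficients. Your write-up is if anything slightly more explicit than the paper, which only cites the right-hand inequality of (\ref{holder}) even though the left-hand one is also needed in the final step.
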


From the inequalities  and   Proposition  \ref{min1}, for any $p \in (1, \infty)$, we obtain 
\[
 \sum_{i=1}^k |\langle f,g_i\rangle|^p \leq  k^{(\frac{3}{2}p - 1)} 
 \sum_{i=1}^k |\langle f,h_i\rangle|^p \quad  \forall f\in \mathcal{H}^n. 
\]

If $p>2$, we have the better estimation. 
\begin{theorem}\label{minp}
Let $\{f_i\}_{i=1}^k$ be a frame for $\calH^n$ and let $\{h_i\}^k_{i=1}$ be a dual frame of $\{f_i\}_{i=1}^k$. If  $\{g_i\}_{i=1}^k$ is the canonical dual, then  for any $p\in (2, \infty)$, we have 
\[
 \sum_{i=1}^k |\langle f,g_i\rangle|^p \leq  k^{(\frac{1}{2}p - 1)} 
 \sum_{i=1}^k |\langle f,h_i\rangle|^p \quad  \forall f\in \mathcal{H}^n. 
\]

\end{theorem}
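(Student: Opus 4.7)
The plan is to combine the $\ell^2$-minimization of the canonical dual (Proposition \ref{min2}) with the two standard sequence-space comparisons between $\ell^2$ and $\ell^p$ norms on $\R^k$ (or $\C^k$). Set $a_i := |\langle f, g_i\rangle|$ and $b_i := |\langle f, h_i\rangle|$. Proposition \ref{min2} gives the key inequality
\[
\sum_{i=1}^k a_i^2 \;\leq\; \sum_{i=1}^k b_i^2,
\]
so the task is really to convert this $\ell^2$ inequality into an $\ell^p$ inequality, paying only the stated power of $k$.

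For the left-hand side, I would invoke the monotonicity of $\ell^p$-norms on counting measure: since $p\geq 2$, one has $\|a\|_p \leq \|a\|_2$, i.e.\ $\sum a_i^p \leq (\sum a_i^2)^{p/2}$. For the right-hand side, I would invoke the opposite direction via H\"older's inequality applied to $\sum b_i^2 = \sum b_i^2 \cdot 1$ with conjugate exponents $p/2$ and $p/(p-2)$, which yields
\[
\sum_{i=1}^k b_i^2 \;\leq\; k^{(p-2)/p}\left(\sum_{i=1}^k b_i^p\right)^{2/p},
\]
and therefore $(\sum b_i^2)^{p/2} \leq k^{p/2 - 1}\sum b_i^p$. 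Chaining these three estimates together gives exactly
\[
\sum_{i=1}^k a_i^p \;\leq\; \Bigl(\sum_{i=1}^k a_i^2\Bigr)^{p/2} \;\leq\; \Bigl(\sum_{i=1}^k b_i^2\Bigr)^{p/2} \;\leq\; k^{p/2-1} \sum_{i=1}^k b_i^p,
\]
which is the claimed bound.

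There is no real obstacle here beyond choosing the correct direction of each $\ell^p$-$\ell^2$ comparison: raising the $\ell^2$ inequality to the power $p/2$ is the natural way to preserve the minimization property of $g_i$, and the factor $k^{p/2-1}$ arises purely from the H\"older step on the dual side. The only thing to check is that the exponent works out, which it does since $p\cdot (1/2 - 1/p) = p/2 - 1$, matching the statement. This argument is sharper than the one obtained by iterating the bounds from \eqref{holder} and Proposition \ref{min1} (which would give $k^{3p/2-1}$) precisely because we defer switching between $\ell^1$ and $\ell^p$ and instead compare directly to $\ell^2$, exploiting that the canonical dual minimizes the $\ell^2$ sum rather than the $\ell^1$ sum.
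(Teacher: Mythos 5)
Your proof is correct and is essentially the paper's own argument: both rest on the same three ingredients — the $\ell^{p/2}$-vs-$\ell^1$ comparison (your H\"older step with exponents $p/2$ and $p/(p-2)$ is exactly the right-hand inequality of \eqref{holder} with exponent $p/2$), the monotonicity $\|a\|_p\leq\|a\|_2$, and Proposition \ref{min2} — chained with the same exponents. The only difference is cosmetic: the paper bounds $\sum_i|\langle f,h_i\rangle|^p$ from below while you bound $\sum_i|\langle f,g_i\rangle|^p$ from above.
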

\begin{proof}
First observe that the right-side  inequality with $p/2$ implies that  
$$ \sum_{i=1}^k |\langle f,h_i\rangle|^p
=   \sum_{i=1}^k  ( |\langle f,h_i\rangle|^2)^{p/2}  
\ge k^{(1-\frac{p}{2})}  \left( \sum_{i=1}^k   |\langle f,h_i\rangle|^2\right)^{p/2}.  $$
By Proposition  \ref{min2} and  the left-side  inequality with $p/2$, we have that 
$$ \sum_{i=1}^k |\langle f,h_i\rangle|^p
\ge k^{(1-\frac{p}{2})}  \left( \sum_{i=1}^k   |\langle f,g_i\rangle|^2\right)^{p/2}
\ge  k^{(1-\frac{p}{2})}   \sum_{i=1}^k   |\langle f,g_i\rangle|^p,  $$
which completes the proof.
\end{proof}

\section*{Acknowledgment}

This research work was done when K. Berry, E. Evert, and S. Nghiem participated in the Central Michigan University NSF-REU program in the summer of 2012.

\bibliographystyle{amsplain}
\bibliography{References}

\end{document}